\documentclass[11pt, a4paper, fleqn]{article}
\usepackage[T1]{fontenc}
\usepackage[utf8]{inputenc}
\usepackage{newunicodechar}
\newunicodechar{ő}{\H{o}}
\newunicodechar{Ő}{\H{O}}
\newunicodechar{é}{\'e}
\newunicodechar{É}{\'E}
\usepackage{geometry}
\usepackage{amsmath,amssymb,amsthm}
\usepackage{enumitem}
\usepackage{booktabs}
\usepackage{graphicx}
\usepackage[unicode=true,
 bookmarks=false,
 breaklinks=false,pdfborder={0 0 1},backref=section,colorlinks=false]
 {hyperref}

\numberwithin{equation}{section}
\numberwithin{figure}{section}

\theoremstyle{plain}
\newtheorem{theorem}{Theorem}[section]
\newtheorem{lemma}[theorem]{Lemma}
\newtheorem{proposition}[theorem]{Proposition}
\newtheorem{corollary}[theorem]{Corollary}
\newtheorem{observation}[theorem]{Observation}

\theoremstyle{definition}
\newtheorem{definition}[theorem]{Definition}
\newtheorem{remark}[theorem]{Remark}

\newcommand{\dotcup}{\mathrel{\dot{\cup}}}
\newcommand{\dotsub}{\mathrel{\dot{\setminus}}}

\newcommand{\Par}{\operatorname{par}}
\newcommand{\cld}{\operatorname{cld}}
\newcommand{\lev}{\operatorname{level}}
\newcommand{\PTG}{\operatorname{PTG}}
\newcommand{\CC}{\mathbb{C}}
\newcommand{\FF}{\mathbb{F}}
\newcommand{\NN}{\mathbb{N}}
\newcommand{\QQ}{\mathbb{Q}}
\newcommand{\RR}{\mathbb{R}}
\newcommand{\ZZ}{\mathbb{Z}}

\title{The Non-Cancelling-Intersections Conjecture Fails for Left-Linear Trees}
\author{Hermann Wilhelm \\ Technische Universität Ilmenau}
\date{\today}

\begin{document}

\maketitle

\begin{abstract}
First formulated in \cite{amarilli2024non}, the Non-Cancelling Intersections (NCI) conjecture is an open problem in combinatorics stating that any set union can be constructively built from its algebraically non-cancelling intersections using only disjoint unions and subset complements. Also in \cite{amarilli2024non}, two orthogonal possible strengthenings are proposed: using only left-linear trees, and using non-trivial intersections only positively or only negatively depending on the sign of their Möbius value. Here we show that using only left-linear trees, the conjecture is false (independent of the other strengthening). Our argument is non-constructive. We prove the existence of a counterexample, though it is of immense size.
\end{abstract}

\section{Introduction}\label{sec:introduction}

A basic question in combinatorics is how to express a union of sets in terms of its intersections. The classical answer is the inclusion--exclusion principle, which alternates additions and subtractions. Its defect is precisely that it subtracts: intermediate terms overshoot the target and are corrected only later, so the identity yields no direct combinatorial construction of the union, and the cancellations it relies on can be enormous even when the final answer is small.

The Non-Cancelling Intersections (NCI) conjecture, formulated in~\cite{amarilli2024non}, asserts that this cancellation can always be avoided. Informally, it states that the union of any finite family of sets can be built up from its \emph{algebraically non-cancelling} intersections --- those whose Möbius value is nonzero --- using only two operations: disjoint union, and the complement of a subset inside a superset. A witness to the conjecture for a given family is thus an expression in a restricted algebra, the \emph{dot-algebra} (Definition 3.2 in \cite{amarilli2024non}), which is naturally presented as a tree whose leaves are non-cancelling intersections.

Two orthogonal strengthenings of the conjecture are proposed in~\cite{amarilli2024non}. The first restricts the shape of the witness, requiring the tree to be \emph{left-linear}: every right child is a leaf. The second restricts the way intersections may be used, requiring each non-trivial intersection to occur only positively or only negatively, according to the sign of its Möbius value. Either strengthening, if true, would give a sharper and more usable form of the conjecture; both are open.

Here we show that the NCI conjecture is false when only left-linear trees are allowed in the dot-algebra representation, independently of whether the second strengthening is imposed. Our route to this is a reformulation. Searching for a left-linear dot-algebra expression for a given lattice can be recast as a one-player game, played by toggling elements of the lattice; a game of this kind already appears in~\cite{amarilli2019lighting}, and we call it the \emph{toggle game}. Whether the toggle game can be won on every lattice has been open. We show that it cannot: there exists a lattice on which the toggle game is unwinnable. Our argument is non-constructive. It shows that an unwinnable lattice exists
without exhibiting one.

We stress that the toggle game is a self-contained combinatorial object. Apart from the present ntroduction, the NCI conjecture, left-linear trees and the dot-algebra are used only in Section~\ref{sec:ConnectionDot}, where the translation between the two settings is made precise; outside that section they are mentioned only in the statement of Corollary~\ref{cor:main} and in the open problems. A reader interested only in the toggle game may therefore skip that section entirely, and read the rest of the paper without reference to any of these notions.

We introduce a game called the \emph{plane toggle game}. The plane toggle game is played on the affine plane $\FF_p^2$ (with $p$ prime), whose points can be toggled. In contrast to the toggle game on lattices, choosing a point in the plane toggle game toggles only that point and no other points. However, in the plane toggle game there are constraints for every line. Every line has a special set of \emph{marked} points. The marked points are given from the start (they will be induced by the lattice); they do not change during the game and are independent of the toggled points. Notably, a point can be marked for one line and unmarked for another line. The constraint is the following: whenever a line contains at least two toggled points, at least one toggled point on that line must be marked for that line. The exact definition is given in Section~\ref{sec:PointToggle}. The relevant property is that every plane toggle game in which exactly $\lceil \sqrt{2p} \rceil + 1$ points are marked for every line is induced by a certain lattice, and winnability of the toggle game on that lattice implies winnability of the corresponding plane toggle game.

So our goal is to show that there is a prime number $p$ and a plane toggle game with $\lceil \sqrt{2p} \rceil + 1$ marked points on every line that cannot be won. To show this we use a probabilistic argument. For a random marking (such that $\lceil \sqrt{2p} \rceil + 1$ points are marked for each line), the chance of a specific point being marked on a given line is of order $1 / \sqrt p$. Using a finite version of the Erdős--Beck theorem, we show that $n$ points in the affine plane, not almost all of which are collinear, determine $\Omega(n^2)$ \emph{short} traces (traces of size at most a fixed constant). A trace of a set of points $T$ on a line $\ell$ is the set $T \cap \ell$, provided $|T \cap \ell| \geq 2$. So a short trace comes from a line that does not contain many points of $T$, but at least two. The chance of finding a marked point in each of $\Omega(n^2)$ short traces is of order $p^{-\Omega(n^2)}$. Since there are only $O(p^{2n})$ point sets of size $n$, a probabilistic argument shows that the expected number of admissible sets $T$ of size $n$ is less than one for suitable $n$ and large $p$. Since every winning sequence in the plane toggle game toggles one point at a time, at some point exactly $n$ points must be toggled on, and we arrive at a contradiction.

It remains to consider the case where one line contains almost all of the toggled points when exactly $n$ points are toggled on. Note that it is always possible to toggle on all points of a given line by first toggling a point that is marked for that line and then all other points on that line in any order. However, at some moment a prescribed number of points off that main line must be toggled on, and choosing the parameters accordingly we also get a contradiction in that case.

\section{Preliminaries}\label{sec:preliminaries}

\paragraph{Posets}
Let $P$ be a partially ordered set (poset). All posets in this paper are finite. If $x<y$, we say $y$ is an \emph{ancestor} of $x$ and $x$ is a \emph{descendant} of $y$. If $x<y$ and there is no element $z$ with $x<z<y$, then we say that $y$ is a \emph{parent} of $x$ and $x$ is a \emph{child} of $y$. If the poset is clear from the context and $x$ is one of its elements, we write $\Par(x)$ for the set of parents of $x$, $\cld(x)$ for the set of children of $x$, and
\[
  \uparrow\! x \;=\; \{\, y \in P : y \ge x \,\},
  \qquad
  \downarrow\! x \;=\; \{\, y \in P : y \le x \,\}
\]
for the set consisting of $x$ together with all its ancestors, respectively of $x$ together with all its descendants. Note that $x$ itself belongs to both $\uparrow\! x$ and $\downarrow\! x$. We represent $P$ by a DAG (directed acyclic graph) in the usual way: an edge from $x$ to $y$ indicates that $y$ is a parent of $x$. We omit edge arrows, since arrows always point upwards. The DAG representation defines a poset by taking the transitive closure of the edges. Figure~\ref{fig:toggle} shows an example of a poset and its DAG representation.

\begin{figure}[htbp]
  \centering
  \includegraphics[width=0.4\textwidth]{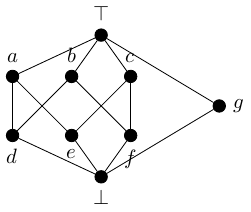}
  \caption{A representation of a poset.}
  \label{fig:toggle}
\end{figure}

The \emph{level} $\lev(v)$ of a vertex $v$ is the length $k$ of a longest path $(v_{0},v_{1},\ldots,v_{k})$ in the DAG representation starting from $v$, so $v_{0}=v$, where $v_{i+1}$ is a parent of $v_{i}$. In particular $\lev(\top)=0$, and if $x<y$ then $\lev(x)>\lev(y)$: a longest path from $y$ to $\top$ can be extended by a saturated chain from $x$ to $y$ of length at least $1$. Consequently every edge of the DAG goes from a vertex of larger level to a vertex of strictly smaller level, and \emph{two distinct vertices of the same level are incomparable}.

\paragraph{Lattices and the Möbius function}
A lattice is a poset in which every pair of elements $x,y$ has a unique least upper bound $x\vee y$ (the \emph{join}) and a unique greatest lower bound $x\wedge y$ (the \emph{meet}). Since all posets and thus all lattices here are finite, this implies the existence of a greatest element $\top$ and a smallest element $\bot$. The Möbius function is defined on the vertices of a lattice as follows: $\mu(\top)=1$, and for all other vertices $v$,
\[
  \mu(v)=-\sum_{u>v}\mu(u).
\]

Note that the poset depicted in Figure~\ref{fig:toggle} is in fact a lattice. Its Möbius values are as follows.
\begin{itemize}
    \item $\mu(\top)=1$.
    \item $\mu(v)=-1$ for $v\in\{a,b,c,g\}$ (vertices on level $1$ always have Möbius value $-1$).
    \item $\mu(d)= - ( \mu(\top) + \mu(a) + \mu(b) ) = 1$, and analogously $\mu(e)=\mu(f)= 1$.
    \item $\mu(\bot) = - (\mu(\top) + \mu(a) + \mu(b) + \mu(c) + \mu(d) + \mu(e) + \mu(f) + \mu(g) ) = 0$.
\end{itemize}

\paragraph{Prime fields}
For a prime number $p\in \NN$, $\FF_p$ denotes the field with $p$ elements and $\FF_p^2$ denotes the vector space of dimension $2$ over $\FF_p$.

\paragraph{Lines and traces}
Let $K$ be a field. Any two distinct points $Q_1 ,Q_2 \in K^2$ lie on a unique line
\[
  \ell \;=\; \{\, Q_1 + i\,(Q_2-Q_1) \;:\; i \in K \,\}.
\]
This representation of $\ell$ is not unique: any two distinct points of $\ell$ describe the same line. To have a unique representation of all $p^2+p$ distinct lines of $\FF_p^2$, we denote the lines by
\begin{align*}
    L_{a,b} &=  \{ (i , ai+b) : i\in \FF_p \} & &\text{for } a,b \in \FF_p, \\
    L_{\infty,b} &=\{ (b , i) : i\in \FF_p \} & &\text{for } b   \in \FF_p .
\end{align*}
Given a set $T=\{Q_1,\dots,Q_n\}$ of $n$ \emph{distinct} points in $K^2$ and a line $\ell\subseteq K^2$, we call the set $\ell\cap T$ the \emph{trace} of $\ell$ on $T$, provided it has at least two elements. We keep the lines and their traces notationally apart and write
\begin{align*}
 \mathcal{L}(T) &\;=\; \bigl\{\, \ell \ :\ \ell \text{ a line in } K^2 \text{ with } |\ell\cap T|\ge 2 \,\bigr\},\\
 \mathcal{T}(T) &\;=\; \bigl\{\, \ell\cap T \ :\ \ell \in \mathcal L(T) \,\bigr\}
\end{align*}
for the set of \emph{lines determined by $T$} and the set of \emph{traces of $T$}, respectively. So $\mathcal L(T)$ is a set of lines, while $\mathcal T(T)$ is a set of subsets of $T$. The map $\ell\mapsto \ell\cap T$ is a bijection from $\mathcal L(T)$ onto $\mathcal T(T)$: it is surjective by the definition of $\mathcal T(T)$, and it is injective because two lines with the same trace share at least two points and therefore coincide. In particular
\begin{align}\label{eq:linesEqualTraces}
  |\mathcal L(T)| \;=\; |\mathcal T(T)| ,
\end{align}
and we use whichever of the two sets is more convenient, appealing to \eqref{eq:linesEqualTraces} whenever we pass from one to the other.

For example, take $K=\RR$ and
\[
T = \{ Q_1, Q_2, Q_3, Q_4 \} = \{ (0,0),\ (1,1),\ (2,2),\ (0,2) \}.
\]
The points $Q_1, Q_2, Q_3$ are collinear, lying on the line $y=x$, while $Q_4=(0,2)$ does not lie on this line. Thus $T$ determines four lines:
\begin{align*}
\ell_{123} &= \{(t,t) : t \in \RR\}, & \ell_{123}\cap T &= \{Q_1,Q_2,Q_3\}, \\
\ell_{14} &= \{(0,t) : t \in \RR\}, & \ell_{14}\cap T &= \{Q_1,Q_4\}, \\
\ell_{24} &= \{(t,2-t) : t \in \RR\}, & \ell_{24}\cap T &= \{Q_2,Q_4\}, \\
\ell_{34} &= \{(t,2) : t \in \RR\}, & \ell_{34}\cap T &= \{Q_3,Q_4\}.
\end{align*}
Hence $\mathcal L(T)=\{\ell_{123},\ell_{14},\ell_{24},\ell_{34}\}$ and
\[
\mathcal{T}(T) = \bigl\{\, \{Q_1,Q_2,Q_3\},\ \{Q_1,Q_4\},\ \{Q_2,Q_4\},\ \{Q_3,Q_4\} \,\bigr\}, \qquad |\mathcal{L}(T)| = |\mathcal{T}(T)| = 4.
\]
This example is visualized in Figure~\ref{fig:traces_example}.

\begin{figure}[htbp]
  \centering
  \includegraphics[width=0.7\textwidth]{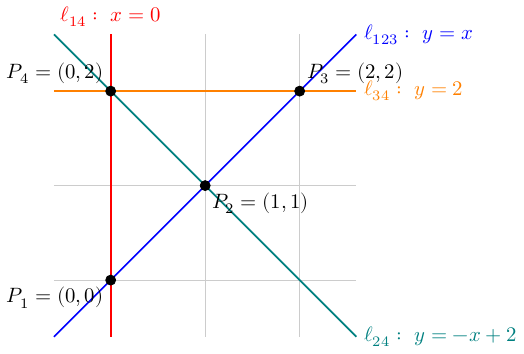}
  \caption{Example of a point set of size four with a trace set of size four.}
  \label{fig:traces_example}
\end{figure}

\section{The Toggle Game}\label{sec:ToggleGame}

The toggle game is played on a given lattice $P$. A state of the toggle game is a subset $T$ of the vertices of $P$, representing the vertices that are \emph{toggled on}. For intuition it is convenient to think of the vertices in $T$ as ``red'' and of the vertices not in $T$ as ``black''; we will use this terminology whenever $T$ is clear from the context. The toggle game starts with all vertices black, so $T=\emptyset$. The goal is to colour all vertices except $\top$ red, so $T = P \setminus \{ \top \}$. A move consists of picking a vertex $v \neq \top$ and toggling the colour of $v$ and of all its descendants. However, a vertex $v$ can only be picked if $v$ and all its descendants currently have the same colour and $\mu(v)\ne 0$. Note that the condition $\mu(v)\ne 0$ depends only on the lattice structure and not on the current state. The toggle game is won if the goal state is reached. It is not possible to ``lose'' a toggle game, since every move can be reverted. However, not all lattices allow the toggle game to be won, as we will see. A \emph{toggle sequence} is a sequence of vertices that are picked in a toggle game; it is \emph{winning} if it leads to the goal state.

Formally:

\begin{definition}\label{def:toggle-sequence}
A \emph{toggle sequence} for a lattice $P$ is a sequence of vertices defined inductively as follows. The empty sequence is a toggle sequence. If $v_{1},\ldots,v_{n}$ is a toggle sequence, then $v_{n+1} \neq \top$ may be appended if and only if $\mu(v_{n+1})\neq 0$ and $v_{n+1}$ and all its descendants have the same toggle parity, where the toggle parity after $n$ moves is
\begin{align*}
    \pi_{n}(v) & =\bigl| \{ i \in [n] : v \leq v_i \} \bigr|\bmod 2 .
\end{align*}
\end{definition}

In other words, picking $v_{n+1}$ ``toggles'' $v_{n+1}$ and all its descendants, and $v_{n+1}$ may only be picked if $v_{n+1}$ and all its descendants currently have the same parity. So picking $v_{n+1}$ either turns $v_{n+1}$ and all its descendants on ($\pi$ changes from $0$ to $1$) or turns $v_{n+1}$ and all its descendants off ($\pi$ changes from $1$ to $0$). The state after $n$ moves is $T_n=\{v: \pi_n(v)=1\}$. The toggle sequence is winning if $T_n = P \setminus \{ \top \}$.

Note that $\top$ is never picked, and that consequently $\pi_n(\top)=0$ throughout: the top element only ever serves as a reference point.

Consider for example the lattice $P$ represented in Figure~\ref{fig:toggle}. A winning toggle sequence for this lattice is $a,d,b,f,g,e,c$, as shown in Figure~\ref{fig:Example_Toggle_Game}. Note that $\bot$ must not appear in a toggle sequence, since $\mu(\bot)=0$.

\begin{figure}[htbp]
  \centering
  \begin{tabular}{l l l}
    \toprule
    \textbf{Picked} & \textbf{Descendants} & \textbf{Game State After Picking} \\
    \midrule
    $a$     & $\{ d,e,\bot \}$         & $\{a,\phantom{b,c,}d,e,\phantom{f,g,}\bot \}$ \\
    $d$     & $\{ \bot \}$             & $\{a,\phantom{b,c,d,}e,\phantom{f,g,\bot} \}$ \\
    $b$     & $\{ d,f,\bot \}$         & $\{a,b,\phantom{c,}d,e,f,\phantom{g,}\bot \}$ \\
    $f$     & $\{ \bot \}$             & $\{a,b,\phantom{c,}d,e,\phantom{f,g,\bot} \}$ \\
    $g$     & $\{ \bot \}$             & $\{a,b,\phantom{c,}d,e,\phantom{f,}g,\bot \}$ \\
    $e$     & $\{ \bot \}$             & $\{a,b,\phantom{c,}d,\phantom{e,f,}g\phantom{,\bot} \}$ \\
    $c$     & $\{ e,f,\bot \}$         & $\{a,b,c,d,e,f,g,\bot \}$ \\
    \bottomrule
  \end{tabular}
  \caption{Example of a winning toggle sequence and the corresponding game states.}
  \label{fig:Example_Toggle_Game}
\end{figure}

\subsection{Connection to the dot-algebra}\label{sec:ConnectionDot}
A winning toggle sequence translates directly into a representation in the dot-algebra. For a vertex $v \neq \top$ let $S_v$ denote the set associated with $v$, i.e.\ the set consisting of $v$ and all its descendants, and let $S_\top$ be the union of all these sets. Toggling a vertex \emph{on} corresponds to $\dotcup$ and toggling it \emph{off} corresponds to $\dotsub$; the legality condition of the toggle game is exactly the condition that the union is disjoint, respectively that the subtracted set is a subset. A toggle sequence always operates on the result obtained so far, so all parentheses are moved to the left. This corresponds to a left-linear tree, since every right child of an internal node in the syntax tree is a leaf. The goal state --- every vertex except $\top$ red --- says precisely that the set built by the sequence is $S_\top$. Note that $\top$ itself is not an element of the dot-algebra, which is why it is excluded both from the picks and from the goal state.

For the sequence $a,d,b,f,g,e,c$ of Figure~\ref{fig:Example_Toggle_Game}, in which $a,b,g,c$ are toggled on and $d,f,e$ are toggled off, this yields
\begin{align*}
     S_\top \;=\;
\left(
    \left(
        \left(
            \left(
                \left(
                    S_a\dotsub S_d
                \right)\dotcup S_b
            \right)\dotsub S_f
        \right)\dotcup S_g
    \right)\dotsub S_e
\right)\dotcup S_c .
\end{align*}
The converse is not true, however: a given representation in the dot-algebra cannot easily be transformed into a toggle sequence, or at least it is not clear how this would be done, because a representation in the dot-algebra could have arbitrary parentheses.

\subsection{The top toggle game}\label{sec:TopToggleGame}

The game can also be set up so that $\top$ may be picked as well and the goal is to colour $\top$, and only $\top$, red. We record that the two formulations are equivalent, so that nothing is lost by forbidding $\top$; after this subsection $\top$ is never picked again.

Define the \emph{top toggle game} as the toggle game with the following two changes: $\top$ may be picked like any other vertex, and the goal state is $T = \{ \top \}$.

\begin{observation}\label{obs:removeTopFromToggle}
    Let $\mathbf v = v_1,\ldots,v_n$ be a winning sequence for the top toggle game on a lattice $P$ and let $v_j$ be the last occurrence of $\top$ in $\mathbf v$. Then $v_{j+1},\ldots ,v_n$ is a winning toggle sequence for $P$.
\end{observation}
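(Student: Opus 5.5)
After the last occurrence $v_j=\top$, the whole lattice $P$ (every vertex is a descendant of $\top$) has a single common parity. The plan is to pin that parity down, so that the tail $v_{j+1},\ldots,v_n$ is seen to start from the state $T=\emptyset$. We then check that it is a legal toggle sequence with the right goal state.

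\emph{Step 1: the state after $v_j$.} Since picking $\top$ is legal only if $\top$ and all its descendants have equal parity, just before step $j$ the state is either $\emptyset$ or $P$. Picking $\top$ flips every vertex, so right after step $j$ the state is $P$ or $\emptyset$. We must exclude $P$. None of $v_{j+1},\ldots,v_n$ equals $\top$, and every picked $v_i\neq\top$ satisfies $\top\not\le v_i$, so $\pi(\top)$ never changes after step $j$. The goal state $\{\top\}$ has $\pi_n(\top)=1$, hence $\pi_j(\top)=1$, and therefore $T_j=P$.

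\emph{Step 2: complement the parities.} Define shifted parities $\pi'_m(v)=\pi_{j+m}(v)+1 \bmod 2$. Then $\pi'_0\equiv 0$, which is the initial state of the ordinary toggle game. Picking $v_{j+m+1}$ flips exactly the vertices of $\downarrow v_{j+m+1}$ in both parity systems. Also, ``$v$ and all its descendants have the same parity'' is invariant under complementing all parities. Hence each $v_{j+m+1}$ is a legal move in the ordinary toggle game ($v\neq\top$ and $\mu(v)\neq 0$ are inherited directly). Finally $T_n=\{\top\}$ means $\pi_n=1$ only at $\top$, so $\pi'_{n-j}$ is $1$ exactly on $P\setminus\{\top\}$, which is the goal state.

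\emph{Potential obstacle.} The only subtle point is Step 1: ruling out $T_j=\emptyset$ via invariance of $\pi(\top)$. One should also note that the ordinary game's parity $\pi$ is defined from the start of the subsequence. It counts picks $v_i\ge v$ with $i>j$, which is exactly $\pi'$ because $\pi_j\equiv 1$ and $\pi'$ is its complement.
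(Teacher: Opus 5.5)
Your proof is correct and follows essentially the same route as the paper: the colour of $\top$ is frozen after its last pick, which forces the state right after move $j$ to be all red, and then the invariance of legality and of the effect of moves under complementing all colours carries the tail over to the ordinary game started from the all-black state. One small slip: in Step~1 the case to exclude is $T_j=\emptyset$, not $P$, as your own conclusion and your final paragraph show; the same observation $\pi_n(\top)=1$ also shows that $\top$ is picked at all, so $v_j$ exists.
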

\begin{proof}
  The colour of $\top$ changes only when $\top$ itself is picked, and $\top$ is red in the goal state, so $\top$ occurs an odd number of times in $\mathbf v$; in particular $v_j$ exists. Since $\top$ can only be picked if all vertices of the lattice have the same colour, immediately before move $j$ either all vertices are black or all vertices are red. In the second case $\top$ would be black after move $j$ and, as $\top$ is not picked again, would stay black, contradicting the goal state. Hence immediately before move $j$ all vertices are black --- that is, the state is the initial one --- and immediately after move $j$ all vertices are red.

  Now compare the run of $v_{j+1},\ldots,v_n$ started from the all-red state with the run of the same sequence started from the all-black state. Complementing all colours simultaneously preserves the legality condition ``$v$ and all its descendants have the same colour'' as well as the effect of a move, so by induction on the number of moves the two runs pass through complementary states, and the second run is a legal toggle sequence. The first run ends in $\{\top\}$, i.e.\ with every vertex other than $\top$ black, so the second ends with every vertex other than $\top$ red, which is the goal state of the toggle game. Finally $\top$ does not occur among $v_{j+1},\ldots,v_n$ by the choice of $j$.
\end{proof}

\begin{lemma}\label{lem:topGameEquivalence}
  The top toggle game on a lattice $P$ can be won if and only if the toggle game on $P$ can be won.
\end{lemma}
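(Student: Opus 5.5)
The two directions are handled separately. The direction ``top toggle game winnable $\Rightarrow$ toggle game winnable'' is exactly Observation~\ref{obs:removeTopFromToggle}: a winning sequence for the top toggle game contains $\top$, and the suffix after its last occurrence is a winning toggle sequence for $P$. It remains to prove the converse.

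For the converse, let $v_1,\ldots,v_n$ be a winning toggle sequence for $P$; it reaches the state $P\setminus\{\top\}$ and never picks $\top$. The plan is to prepend enough moves to turn this into a winning sequence for the top toggle game. First pick $\top$ in the initial all-black state. This is legal, since all vertices have the same colour, and it turns every vertex red; here we also need $\mu(\top)=1\neq 0$, which holds by definition. Then replay $v_1,\ldots,v_n$ from this all-red state.

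By the complementation argument already used in the proof of Observation~\ref{obs:removeTopFromToggle}, the replay behaves as follows. Flipping all colours preserves the legality condition ``$v$ and all its descendants have the same colour'', and it commutes with the effect of each move. So, by induction on the number of moves, running the same sequence from complementary start states produces complementary states at every step, and every move stays legal. The original run from the all-black state ends at $P\setminus\{\top\}$. Hence the run from the all-red state ends at the complement $\{\top\}$, which is the goal state of the top toggle game. The sequence $\top,v_1,\ldots,v_n$ is therefore winning.

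There is no real obstacle. The only points to check are that $\top$ is a legal pick, which uses $\mu(\top)=1$ and the fact that everything is black at the start, and that the complementation symmetry applies verbatim. Both follow immediately from the definitions.
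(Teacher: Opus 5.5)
Your proposal is correct and follows the paper's proof essentially verbatim: one direction is Observation~\ref{obs:removeTopFromToggle}, and for the other you prepend $\top$ to a winning toggle sequence and use the complementation symmetry. Your explicit check that $\mu(\top)=1\neq 0$ makes $\top$ a legal first pick is a small detail the paper leaves implicit.
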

\begin{proof}
  If $\mathbf w$ is a winning toggle sequence, then $\top,\mathbf w$ is a winning sequence for the top toggle game: $\top$ may be picked in the initial state, which is monochromatic, and afterwards the run of $\mathbf w$ from the all-red state is complementary to its run from the all-black state, as in the proof of Observation~\ref{obs:removeTopFromToggle}; it therefore ends in $\{\top\}$. The converse is Observation~\ref{obs:removeTopFromToggle}.
\end{proof}

\section{The plane toggle game in $\FF_p^2$}\label{sec:PointToggle}
Here we introduce a game similar to the toggle game, called the \emph{plane toggle game}. The motivation for this game comes from the blocking gadgets of lattices, defined below.
\subsection{Blocking Gadgets}

The plane toggle game extracts the main feature of lattices that make the toggle game unwinnable. To motivate it, we consider \emph{blocking gadgets} of lattices. These are specific sublattices that make many states unreachable by a toggle sequence from the start.

Consider again the lattice in Figure~\ref{fig:toggle}. If a toggle sequence picks $g$ first, then $g$ and $\bot$ are toggled on. After that, $\bot$ cannot be picked, since $\mu (\bot )=0$. As $\bot$ is on while all other vertices except $g$ are off, $g$ is the only vertex that can be picked. So the toggle sequence is stuck, and the only possible move reverts it to the initial state in which everything is off.

Imagine many more vertices in $\cld(\top) \cap \Par(\bot)$. As long as $\mu(\bot)=0$, all of these vertices have the same property as $g$: if a toggle sequence picks one of them first, then it is stuck and the only possible move is to re-pick that vertex. Note that for this argument it is only necessary that these elements have no other ancestors; they may have arbitrarily many other descendants. With this in mind we define:

\begin{definition}\label{def:blockingGadget}
  For an element $x$ in a lattice $P$, we say that $x$ \emph{induces a blocking gadget} if $\lev(x)=3$ and $\mu (x)=0$. In that case, the blocking gadget induced by $x$ is the interval $Q:=[x,\top]$, that is, the sublattice of $P$ consisting of exactly $x$ and all its ancestors. We say that the blocking gadget (or $x$) \emph{blocks} all vertices in $\cld(\top) \cap \Par(x)$.
\end{definition}

The following lemma formalizes the exact role of the blocking gadget, as discussed above.

\begin{lemma}\label{lem:blockingGadget}
  Let $P$ be a lattice, let $x\in P$ induce a blocking gadget $Q$, let $L$ be the set of level-$1$ elements of $P$ lying in $Q$, and let $M\subseteq L$ be the level-$1$ elements not blocked by $x$. Then in every state of every toggle sequence for $P$ it holds that whenever two elements of $L$ are red, at least one element of $M$ is red.
\end{lemma}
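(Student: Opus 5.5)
The plan is to prove a stronger invariant by induction on the length of the toggle sequence, after first reducing everything to the interval $Q=[x,\top]$.

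\textbf{Step 1: reduce to $Q$.} A pick of $v\notin Q$ does not change the colour of any element of $Q$. Indeed, if $u\in Q$ and $u\le v$, then $x\le u\le v$, so $v\in Q$. Moreover $x$ itself is never picked, since $\mu(x)=0$. So only picks of elements of $Q\setminus\{x,\top\}$ matter, and such a pick of $v$ toggles exactly $[x,v]$ inside $Q$. It is legal only if all of $[x,v]$ has one colour.

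Since $\lev(x)=3$, the elements strictly between $x$ and $\top$ are of two kinds:
\begin{itemize}
\item the level-$1$ elements $L$, split into the blocked set $B=L\setminus M=\cld(\top)\cap\Par(x)$ and the set $M$;
\item a set $W$ of level-$2$ elements of $Q$.
\end{itemize}
Two structural facts follow. For $b\in B$ we have $[x,b]=\{x,b\}$, because $b$ is a parent of $x$. For $w\in W$, every parent of $w$ lies in $M$, because a parent $b\in B$ of $w$ would give $b>w>x$, contradicting $b\in\Par(x)$.

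\textbf{Step 2: the invariant.} Rather than tracking colours directly, I will track parities. For $u\in Q\setminus\{\top\}$ the toggle parity is $\pi(u)=\sum_{v\ge u,\,v\in Q}(\#\text{picks of }v)\bmod 2$. In particular:
\begin{itemize}
\item $\pi(b)=\#\text{picks}(b)$ for $b\in B$;
\item $\pi(w)=\#\text{picks}(w)+\sum_{m\in\Par(w)}\#\text{picks}(m)$ for $w\in W$;
\item $\pi(x)$ is the total number of picks in $Q$.
\end{itemize}
The invariant I aim to prove, call it $(J)$, is a description of all reachable colourings of $Q$ in which every element of $M$ is black. In any such state:
\begin{itemize}
\item at most one element of $B$ is red;
\item if some $b\in B$ is red, then $x$ is red and no element of $W$ is red.
\end{itemize}
Statement $(J)$ immediately gives the lemma: two red elements of $L$ with $M$ all black would mean two red elements of $B$.

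\textbf{Step 3: verify $(J)$ move by move.} The easy transitions are those where $M$ is entirely black both before and after the move, so the picked vertex lies in $B\cup W$.
\begin{itemize}
\item If $b\in B$ is red, then $x$ is red by $(J)$, while every other $b'\in B$ is black and every $w\in W$ is black. Hence picking another $b'$, or picking any $w$, is illegal, since $b'$ (resp.\ $w$) and $x$ have different colours.
\item Conversely, a red $w\in W$ forces $x$ red whenever $w$ was just picked. Then no $b\in B$ can be turned on, because $b$ is black while $x$ is red.
\end{itemize}

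\textbf{Main obstacle.} The hard case is a move that turns the last red element of $M$ off. Before such a move $(J)$ gives no information, so the invariant as stated cannot be inducted on by itself. I will try first to strengthen $(J)$ to a complete description of the reachable colourings of $Q$, indexed by which elements of $M$ are red. The intended content is the following. Whenever some $b\in B$ is red, $b$ was turned on while $x$ was black, and only picks of elements of $M\cup W$ can subsequently flip $x$ without flipping $b$. A pick of $w\in W$ needs $w$ and $x$ of equal colour. So the parity bookkeeping above should show that whenever $x$ and $b$ disagree, some element of $M$ or of $W$ carries the "debt". Tracing this debt, with the fact that parents of $W$-elements lie in $M$, should show that a red element of $M$ must remain until $x$ and $b$ agree again.

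Getting this strengthened invariant exactly right, so that it is closed under picks of elements of $M$, is where I expect the real work to lie.
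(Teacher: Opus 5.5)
\paragraph{There is a genuine gap.} Your reduction to $Q$ and the two structural facts are correct: $[x,b]=\{x,b\}$ for $b\in B$, and $\Par(w)\subseteq M$ for $w\in W$. But the proposal stops exactly where the proof has to happen.

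An invariant that speaks only about states in which $M$ is entirely black, like $(J)$, cannot be checked move by move. A run between two such states can pass through states where $M$ is not black. Moreover, $(J)$ is not even closed under your ``easy'' transitions. Turning some $b\in B$ on requires $x$ black, and $(J)$ after the move demands that no element of $W$ is red. So you need to know that no element of $W$ is red while $x$ is black. $(J)$ does not record this, and your second bullet only covers a $w$ that was just picked.

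Nor can the parity bookkeeping of Step~2 supply the strengthening. Modulo $2$ it only gives that, when $M$ is black, the number of red elements of $L\cup W$ is congruent to $[x\text{ red}]$. That does not bound it by $1$.

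\paragraph{The missing idea.} Count with signs in $\ZZ$, and let the legality condition fix the sign.
\begin{itemize}
\item A picked $v\in L\cup W$ has the colour of its descendant $x$.
\item $x$ is never picked, since $\mu(x)=0$. It flips at every pick in $L\cup W$ and at no other move.
\item So such a pick turns $v$ on exactly when $x$ is black.
\item Hence ``on''-picks minus ``off''-picks in $L\cup W$ equals $[x\text{ red}]\in\{0,1\}$ in every state.
\end{itemize}
Translated into a statement about colourings only, let $R$ be the red set. Then the quantity
\[
  [x\text{ red}]\;-\;|R\cap L|\;-\;|R\cap W|\;+\;\sum_{w\in W}|\Par(w)\cap R|
\]
is $0$ initially and is preserved by every legal pick of an element of $B$, $M$ or $W$.
\begin{itemize}
\item For $b\in B$ and $w\in W$ the pick flips exactly $\{x,b\}$, respectively $\{x,w\}$. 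In both cases the two changes cancel.
\item For $m\in M$, use $[x,m]=\{x,m\}\cup\{w\in W: m\in\Par(w)\}$. All of it flips in the same direction, so the $x$- and $L$-terms cancel, as do the two $W$-terms.
\end{itemize}

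This is the strengthened invariant you were looking for, and it holds in every state, not only those with $M$ black. When $R\cap M=\emptyset$ the last sum vanishes, giving $|R\cap(L\cup W)|=[x\text{ red}]\le 1$, which is the lemma. The paper argues in exactly this way, phrasing it through the counts $\operatorname{red}(v)-\operatorname{black}(v)$.
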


\begin{proof}
Let $C$ be the set of level-$2$ elements of $Q$ and put $A:=L\cup C=Q\setminus\{\top,x\}$. Note that $\Par(c)\subseteq M$ for every $c\in C$: an element of $L\setminus M$ is a parent of $x$, so no element lies strictly between it and $x$. We show the following stronger statement: Whenever two elements of $A$ are red, at least one element of $M$ is red.

Fix a toggle sequence for $P$ and consider the state at its end; since every prefix of a toggle sequence is again one, this covers every state. For $v\in A$ let $\operatorname{red}(v)$ and $\operatorname{black}(v)$ denote the number of times $v$ has been picked and thereby recoloured from black to red, respectively from red to black. Since $x$ is a descendant of every element of $A$, a picked element of $A$ always has the colour of $x$; hence
\begin{align}\label{eq:gadget-when}
  \text{$v\in A$ is picked while $x$ is black}\iff\text{that pick recolours $v$ from black to red.}
\end{align}

Since $\mu(x)=0$, the element $x$ is never picked, and $\top$ is never picked either; a pick of a vertex outside $A$ does not change the colour of $x$, because $x$ is a descendant only of the elements of $Q$. So the colour of $x$ flips exactly at the moves that pick an element of $A$. As $x$ starts black, the number of such moves made while $x$ is black exceeds the number made while $x$ is red by $1$ if $x$ is red at the end, and by $0$ if $x$ is black. By \eqref{eq:gadget-when} this says
\begin{align}\label{eq:gadget-global}
  \sum_{v\in A}\operatorname{red}(v)\;-\;\sum_{v\in A}\operatorname{black}(v)\;=\;[\,x\text{ is red}\,]\;\in\;\{0,1\}.
\end{align}

Now assume that every element of $M$ is black. An element $v\in L$ lies at level $1$, so the only elements above it are $\top$ and $v$ itself; its colour therefore changes only when $v$ is picked, and since it starts black,
\begin{align}\label{eq:gadget-level1}
  \operatorname{red}(v)-\operatorname{black}(v)=[\,v\text{ is red}\,] \qquad (v\in L);
\end{align}
in particular $\operatorname{red}(a)=\operatorname{black}(a)$ for every $a\in M$, by assumption.

The elements above $c\in C$ are $\top$, $c$ and the elements of $\Par(c)\subseteq M$, so the colour of $c$ changes exactly when $c$ or one of its parents is picked. Counting these changes with \eqref{eq:gadget-when} and using $\operatorname{red}(a)=\operatorname{black}(a)$ for $a\in\Par(c)$, the picks of the parents cancel, and only the picks of $c$ itself matter:
\begin{align}\label{eq:gadget-level2}
  [\,c\text{ is red}\,]
  &=\Bigl(\operatorname{red}(c)+\!\!\sum_{a\in\Par(c)}\!\!\operatorname{red}(a)\Bigr)
   -\Bigl(\operatorname{black}(c)+\!\!\sum_{a\in\Par(c)}\!\!\operatorname{black}(a)\Bigr)\nonumber\\
  &=\operatorname{red}(c)-\operatorname{black}(c).
\end{align}

By \eqref{eq:gadget-level1} and \eqref{eq:gadget-level2} every summand of \eqref{eq:gadget-global} satisfies $\operatorname{black}(v)\le\operatorname{red}(v)\le\operatorname{black}(v)+1$, with $\operatorname{red}(v)=\operatorname{black}(v)+1$ exactly when $v$ is red. Since by \eqref{eq:gadget-global} these differences add up to at most $1$, at most one of them is nonzero. Hence at most one element of $A$ is red; in particular at most one element of $L$ is red. Contrapositively, if two elements of $L$ are red, then some element of $M$ is red.
\end{proof}

The more vertices a blocking gadget of $x$ blocks, the more vertices we need in $\cld(\top) \setminus \Par(x)$ in order to force $\mu(x)=0$. Crucially, we can get away with $O(\sqrt{p})$ vertices in $\cld(\top) \setminus \Par(x)$ while maintaining a depth of $3$ if $|\cld(\top)|=p$, as explained below.

Suppose $|\cld(\top)| = p$ and $\cld(\top) \setminus \Par(x) = \{ a_1,a_2,\ldots,a_k \}$. Introducing, for all $1\le i<j\le k$, the $\binom{k}{2}$ vertices $c_{i,j}$ on level $2$ with parents $a_i$ and $a_j$ (which implies $\mu (c_{i,j}) = 1$) makes
\begin{align*}
    \mu(x)=-\left( 1-p+\binom{k}{2} \right) = p-1-\binom{k}{2}.
\end{align*}
Thus, in order to get $\mu(x)=0$, we need to choose $k$ such that $p-1 \leq \binom{k}{2}$; the inequality suffices because we may simply omit some of the $c_{i,j}$. Solving for $k$ yields
\begin{align}\label{eq:exactNumberMarkedPoints}
    k \geq \left\lceil \frac{1 + \sqrt{8p - 7}}{2} \right\rceil,
\end{align}
so setting $k := 1 + \left\lceil \sqrt{2p}\right\rceil$ is sufficient, since then
$\binom k2 \ge \tfrac{(1+\sqrt{2p})\sqrt{2p}}{2} = p + \tfrac{\sqrt{2p}}{2} \ge p-1$.

The goal is to construct a lattice containing so many overlapping blocking gadgets that, no matter which vertices are toggled, a toggled vertex is a blocking vertex in too many gadgets, so that after a few moves only toggling back is possible.

\subsection{The Game}
The only property of the toggle game on lattices we need to show that there is a lattice on which the toggle game cannot be won is given by the blocking gadgets. The plane toggle game extracts this property for a specific lattice with many overlapping blocking gadgets.

Intuitively, we play on the points of $\FF_p^2$, start in the state where all points are black and try to colour all points red. However, at every moment the game state must be \emph{admissible}, meaning that every trace determined by the red points must contain at least one point marked for the corresponding line. The marked points are given by a \emph{marking function} $\mathfrak{m}$, which assigns to every line $\ell$ of $\FF_p^2$ a subset $\mathfrak{m}(\ell)\subseteq \ell$ of \emph{marked} points of $\ell$. Note that a point can be marked for one line and unmarked for another line. Every line will correspond to a blocking gadget induced by an element $x$ and the marked points on that line will correspond to the elements of $\cld(\top) \setminus \Par(x)$.

Figure~\ref{fig:Point_Toggle_Example} contains two example games for the same prime number $p=3$ but different markings. In both games each line has exactly one marked point. Consider the game on the left. For this game, $T= \{ (1,0) \}$ is an admissible state, since that determines no traces. Indeed, for any plane toggle game, any point can be toggled from the initial state. From $T= \{ (1,0) \}$, a move that is not allowed would be picking $(0,0)$, since that would create the trace $\{ (0,0) , (1,0)\}$ and these two points are not marked for the corresponding line $L_{0,0}$. For the game on the right, $T=\{(0,0),(0,1),(1,1)\}$ is admissible, since it determines three lines and each of the three corresponding traces
$\mathcal{T}(T) = \{ \{(0,0),(0,1)\} , \{(0,0),(1,1)\} , \{(0,1),(1,1)\} \}$
contains a point marked for the corresponding line.

A winning plane toggle sequence for the right plane toggle game in Figure~\ref{fig:Point_Toggle_Example} is $(0,0)$, $(0,1)$, $(1,1)$, $(1,0)$, $(0,2)$, $(1,2)$, $(2,2)$, $(2,1)$, $(2,0)$, so this plane toggle game is winnable. Note that what makes the game winnable is the fact that the markings are in a way clustered, so choosing the points first that are marked on many lines gives a winning sequence. The left plane toggle game, on the other hand, is not winnable, because its marked points are more spread out. Proving this is already more difficult, because a plane toggle sequence can be arbitrarily long, toggling some points on and other points off again. However, every winning plane toggle sequence has to pass through a state in which exactly seven points are red, and by brute-forcing all these possibilities one can show that all states of size exactly seven are not admissible, proving that this specific plane toggle game is not winnable.

\begin{figure}[htbp]
  \centering
  \includegraphics[width=0.95\textwidth]{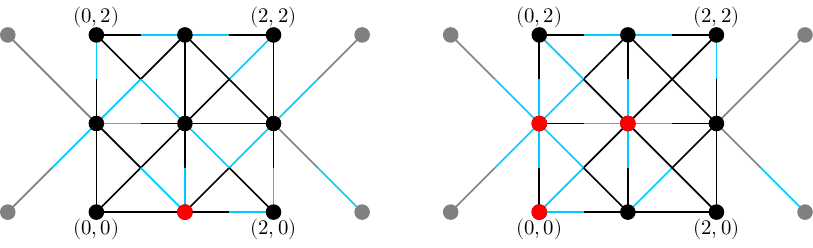}
  \caption{Two examples of admissible states in plane toggle games. Both games are over $\FF_3^2$ for two different markings. Since a point can be marked for one line and unmarked for another line, we show the marking of a point on a line next to the point. A point is marked for a line if the line is blue around that point. For example, $(1,1)$ on the right is marked for the vertical and horizontal line through it, but not for the diagonal lines through it. Both markings mark one point on each line. The gray points, which are repetitions of other points, are shown in order to have straight lines in the picture. For example, the line through $(0,1)$ and $(1,2)$ would have to go through $(2,0)$, but by identifying $(-1,0)$ with $(2,0)$ we can draw it as the straight line from $(-1,0)$ to $(2,0)$. A toggled point is shown in red.}
  \label{fig:Point_Toggle_Example}
\end{figure}

The purpose of the plane toggle game is that some plane toggle games are an easier version of the toggle game on certain lattices. We will find a plane toggle game corresponding to a toggle game on a lattice such that the plane toggle game cannot be won. This implies that the toggle game on the underlying lattice cannot be won either.

Formally, the plane toggle game $\PTG(p,\mathfrak{m})$ on the affine plane $\FF_p^2$ with marking function $\mathfrak{m}$ is defined as follows. A state of $\PTG(p,\mathfrak{m})$ is a subset $T \subseteq \FF_p^2$. The initial state is $\emptyset$ and the goal state is $\FF_p^2$; in other words, the goal is to colour all points red, starting from the state in which all points are black. Let $\mathbf q = (q_1,\dots,q_m)$ be a sequence of points of $\FF_p^2$. The \emph{game path} induced by $\mathbf q$ is the sequence of states after $i$ moves, formally
\begin{align*}
    \mathcal{G}(\mathbf q)  &= ( T_0 , T_1, \ldots , T_m ),     \\
    T_i          &= \bigl\{\, q \in \FF_p^2 \;:\; |\{ j \in \NN : 1\leq j\leq i \text{ and } q_j = q \}| \text{ is odd} \,\bigr\}.
\end{align*}
So a point is red after $i$ moves (that is, lies in $T_i$) if and only if it has been picked an odd number of times among the first $i$ moves.

A set $T \subseteq \FF_p^2$ is called \emph{admissible} for a marking $\mathfrak{m}$ if every trace of $T$ contains at least one marked point of the corresponding line, i.e.\ if for every line $\ell$ with $|T\cap \ell|\ge 2$ we have $T\cap\ell\cap\mathfrak{m}(\ell) \neq \emptyset$. A \emph{plane toggle sequence} is a sequence $q_1 , \dots , q_m \in \FF_p^2$ such that every state in the induced game path is admissible. A plane toggle sequence is \emph{winning} if $T_m = \FF_p^2$.

Note that $|T_i|$ changes by exactly $1$ in every move. In particular, every winning plane toggle sequence passes through a state with exactly $n$ red points, for every $0\le n\le p^2$.

\section{Lattices Inducing Plane Toggle Games}\label{sec:LatticeConstruction}

Here we define the lattices that induce plane toggle games.

\subsection{The Lattice $P_{p,\mathfrak{m}}$}

The lattice we use to disprove the left-linear version of the conjecture is the union of many overlapping blocking gadgets.
Let $p$ be a prime number, put
\[
  w:=\left\lceil \sqrt{2p}\right\rceil +1 ,
\]
and let $\mathfrak{m}$ be a marking function that assigns to each line $L_{i,j}$ of $\FF_{p}^{2}$ a subset $\mathfrak{m}(L_{i,j}) \subseteq L_{i,j}$ with $|\mathfrak{m}(L_{i,j})|=w$. We define a lattice $P_{p,\mathfrak{m}}$ for every such $p$ and $\mathfrak{m}$; later we show that there is a specific pair $(p,\mathfrak m)$ for which the toggle game on $P_{p,\mathfrak{m}}$ cannot be won.
The lattice $P_{p,\mathfrak{m}}$ has depth $4$ and contains the following elements.
\begin{itemize}
\item On level $0$, the global top element $\top$.
\item On level $1$ (so in $\cld(\top)$), the $p^{2}$ elements $a_{i,j}$ with $(i,j)\in\FF_p^2$, one for each point of $\FF_p^2$. We identify $a_{i,j}$ with the point $(i,j)$ whenever convenient.
\item On level $3$, the $p^{2}+p$ elements $b_{i,j}$ with $i\in\FF_p \cup \{\infty\}$ and $j\in\FF_p$, one for each line $L_{i,j}$ of $\FF_{p}^{2}$. The level-$1$ ancestors of $b_{i,j}$ are exactly the points of the line $L_{i,j}$. These elements play the role of the $\bot$-elements of the individual blocking gadgets, so there will be level-$2$ elements between $b_{i,j}$ and its level-$1$ ancestors in order to make $\mu(b_{i,j})=0$.
\item On level $2$, elements $c_{i,j,z}$ between each $b_{i,j}$ and its level-$1$ ancestors, chosen so that $\mu(b_{i,j})=0$, exactly as described for blocking gadgets above. The element $c_{i,j,z}$ has the following properties:
\begin{itemize}
    \item $\cld(c_{i,j,z}) = \{ b_{i,j} \}$;
    \item $| \Par(c_{i,j,z}) | = 2 $;
    \item $\Par(c_{i,j,z}) \subseteq \mathfrak{m}(L_{i,j}) $;
    \item $\Par(c_{i,j,z}) \neq \Par(c_{i,j,z'})$ for $z\neq z'$.
\end{itemize}
The index $z$ ranges over $\{1,\ldots,p-1\}$, so there are exactly $p-1$ such elements for every line. We assume in addition that
\begin{align}\label{eq:allMarkedUsed}
  \bigcup_{z=1}^{p-1} \Par(c_{i,j,z}) = \mathfrak{m}(L_{i,j}) ,
\end{align}
that is, every marked point of $L_{i,j}$ is a parent of at least one $c_{i,j,z}$. This is required because of the rounding in \eqref{eq:exactNumberMarkedPoints}: whenever $\binom{w-1}{2}\ge p-1$, fewer than $w$ marked points already suffice to build the $c_{i,j,z}$, and a marked point could otherwise end up being a parent of $b_{i,j}$. This would not break anything, since more marked points only make the game easier, but assuming \eqref{eq:allMarkedUsed} spares us that case. The assumption can always be met: covering all $w$ marked points requires only $\lceil w/2 \rceil$ of the pairs, and $\lceil w/2\rceil\le p-1$ for every prime $p\ge3$.
\item On level $4$, the global bottom element $\bot$ with $\Par(\bot) = \{b_{i,j}: i\in\FF_p \cup \{\infty\},\ j\in\FF_p\}$.
\end{itemize}
A point of $L_{i,j}$ is a \emph{parent} of $b_{i,j}$ exactly if it is not a parent of any $c_{i,j,z}$; by \eqref{eq:allMarkedUsed} these are exactly the unmarked points of $L_{i,j}$, and the marked points are ancestors of $b_{i,j}$ through the corresponding $c_{i,j,z}$. In either case the set of level-$1$ ancestors of $b_{i,j}$ is exactly $L_{i,j}$, as required.

The $w$ marked points of $L_{i,j}$ define $\binom{w}{2}=w(w-1)/2$ pairs, and we use $p-1$ of them, one for each $c_{i,j,z}$. This is possible, because
\[
\frac{w(w-1)}{2}=\frac{(\left\lceil \sqrt{2p}\right\rceil +1)\left\lceil \sqrt{2p}\right\rceil }{2}\geq\frac{2p+\left\lceil \sqrt{2p}\right\rceil }{2}\geq p-1 .
\]
It is not important which pairs are used, nor that the parents of $c_{i,j,z}$ can be recovered from $z$.

With this choice we indeed get, for every line $L_{i,j}$,
\[
  \mu(b_{i,j}) \;=\; -\Bigl( \mu(\top) + \sum_{a\in L_{i,j}}\mu(a) + \sum_{z=1}^{p-1}\mu(c_{i,j,z}) \Bigr) \;=\; -\bigl( 1 - p + (p-1) \bigr) \;=\; 0 ,
\]
using $\mu(a_{i,j})=-1$ for level-$1$ elements and $\mu(c_{i,j,z})=1$ for level-$2$ elements. Since moreover $\lev(b_{i,j})=3$, every $b_{i,j}$ induces a blocking gadget in the sense of Definition~\ref{def:blockingGadget}, and the blocked vertices are exactly the unmarked points of $L_{i,j}$.

That $P_{p,\mathfrak{m}}$ is indeed a lattice is shown in Appendix~\ref{sec:PpmIsLattice}.

\subsection{From Toggle Games to Plane Toggle Games}

We now relate the toggle game on $P_{p,\mathfrak{m}}$ to the plane toggle game $\PTG(p,\mathfrak{m})$.

\begin{lemma}\label{lem:toggle-to-plane}
    If the toggle game can be won on $P_{p,\mathfrak{m}}$, then the plane toggle game $\PTG(p,\mathfrak{m})$ can be won.
\end{lemma}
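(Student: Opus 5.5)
Given a winning toggle sequence $v_1,\dots,v_n$ for $P_{p,\mathfrak m}$, we project it onto the level-$1$ elements. Let $\mathbf q$ be the subsequence of those $v_i$ that are level-$1$ elements $a_{i,j}$, each identified with its point of $\FF_p^2$. We claim that $\mathbf q$ is a winning plane toggle sequence for $\PTG(p,\mathfrak m)$.

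The first step is to see that the colour of a level-$1$ element $a$ changes exactly when $a$ itself is picked. Indeed, the only vertices above $a$ are $\top$, which is never picked, and $a$. Hence, after any prefix $v_1,\dots,v_k$ of the toggle sequence, the set of red level-$1$ elements equals the state of the game path of $\mathbf q$ after the corresponding number of moves, namely the number of level-$1$ picks among $v_1,\dots,v_k$. Every state $T_t$ of $\mathcal G(\mathbf q)$ arises this way, for instance right after the $t$-th level-$1$ pick, or at the start when $t=0$. At the end of the toggle sequence every vertex other than $\top$ is red, in particular all $a_{i,j}$, so the final state is $\FF_p^2$.

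It remains to show admissibility of every state. We fix a line $L_{i,j}$ and apply Lemma~\ref{lem:blockingGadget} with $x=b_{i,j}$. This element induces a blocking gadget because $\lev(b_{i,j})=3$ and $\mu(b_{i,j})=0$, as computed above. The level-$1$ elements of the gadget $Q=[b_{i,j},\top]$ are exactly the points of $L_{i,j}$, since these are precisely the level-$1$ ancestors of $b_{i,j}$. The blocked ones, those in $\cld(\top)\cap\Par(b_{i,j})$, are exactly the unmarked points; this uses \eqref{eq:allMarkedUsed}. Hence $M=\mathfrak m(L_{i,j})$. The lemma then says that whenever two points of $L_{i,j}$ are red, some point of $\mathfrak m(L_{i,j})$ is red. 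This is exactly the admissibility condition $T\cap L_{i,j}\cap\mathfrak m(L_{i,j})\neq\emptyset$ for every line with $|T\cap L_{i,j}|\ge2$.

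The only delicate step is verifying that the hypotheses of Lemma~\ref{lem:blockingGadget} are met. We need the level computation: $b_{i,j}$ is at level $3$, and the $c_{i,j,z}$ put it there even though its unmarked parents are at level $1$. We also need that the level-$1$ elements of $Q$ are exactly the points of $L_{i,j}$, and the identification of $M$ with the marked points. These facts follow directly from the construction. The rest is the bookkeeping that the red set restricted to level $1$ evolves exactly like the game path of $\mathbf q$.
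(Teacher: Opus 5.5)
Your proposal is correct and follows essentially the same route as the paper. You restrict the winning toggle sequence to its level-$1$ picks, use that a level-$1$ colour changes only when that vertex is picked, and get admissibility from Lemma~\ref{lem:blockingGadget} applied to each $b_{i,j}$, whose unblocked level-$1$ elements are exactly the marked points by \eqref{eq:allMarkedUsed}.
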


\begin{proof}
    Let $\mathbf{v}=v_1,\ldots,v_m$ be a winning toggle sequence for $P_{p,\mathfrak m}$, let $\pi_i$ be the toggle parity of Definition~\ref{def:toggle-sequence}, and let
    $T_i := \{\, a \in \FF_p^2 : \pi_i(a) = 1 \,\}$
    be the set of level-$1$ vertices that are red after $i$ moves, viewed as a set of points of $\FF_p^2$.

    A level-$1$ vertex has no ancestor other than $\top$, so its colour changes exactly when it is picked itself. Hence $T_i$ changes by exactly one point when $v_i$ is a level-$1$ vertex and does not change otherwise, so the restriction of $\mathbf v$ to its level-$1$ entries is a sequence of points of $\FF_p^2$ whose induced game path consists of the sets $T_i$. It starts at $T_0=\emptyset$, and since $\mathbf v$ is winning, every vertex other than $\top$ is red at the end, so it ends at $T_m=\FF_p^2$.

    It therefore suffices to show that every $T_i$ is admissible, i.e.\ that for every line $L_{k,l}$ with $|T_i \cap L_{k,l}| \ge 2$ the set $T_i \cap L_{k,l}$ contains a point marked for $L_{k,l}$. This is precisely the role of the blocking gadget induced by $b_{k,l}$, which consists of $b_{k,l}$, the elements $c_{k,l,z}$ and the points of $L_{k,l}$. Its level-$1$ elements are the points of $L_{k,l}$, and by \eqref{eq:allMarkedUsed} the ones that are not blocked are exactly the marked ones. So Lemma~\ref{lem:blockingGadget} applies and yields: whenever at least two points of $L_{k,l}$ are red, at least one red point of $L_{k,l}$ is marked for $L_{k,l}$.
\end{proof}

Note that the converse of Lemma~\ref{lem:toggle-to-plane} is not claimed: the plane toggle game might be winnable while the toggle game on $P_{p,\mathfrak{m}}$ is not. We will show that there is a marking $\mathfrak{m}$ for which the plane toggle game cannot be won, which by Lemma~\ref{lem:toggle-to-plane} implies that the toggle game on $P_{p,\mathfrak{m}}$ cannot be won either.

\section{The finite Erdős--Beck theorem}\label{sec:FiniteBeck}
Recall the statement over the reals:

\begin{theorem}[Erdős--Beck]\label{thm:beckR}
There is an absolute constant $\beta>0$ with the following property. Let $T\subseteq\RR^{2}$ be a set of $n\ge 2$ points and let $k\ge1$ be such that no line contains more than $n-k$ points of $T$. Then $|\mathcal L(T)|\ge \beta\,n\,k$.
\end{theorem}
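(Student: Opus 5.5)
Over $\RR$ the natural route is through the Szemerédi--Trotter incidence bound. I would use it in the standard form: for $n$ points and any $r\ge 2$, the number of lines containing at least $r$ of the points is at most $C\,(n^{2}/r^{3}+n/r)$ for an absolute constant $C$. Fix a large constant $K$, to be chosen from $C$. The proof splits according to whether some line contains more than $n/K$ points of $T$.

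\emph{Case A: every line contains at most $n/K$ points of $T$.} Group the lines of $\mathcal L(T)$ dyadically by their number of points $r\in[2^{j},2^{j+1})$. By Szemerédi--Trotter, the lines in the range $K\le r\le n/K$ contain at most
\[
\sum_{K\le 2^{j}\le n/K} C\Bigl(\frac{n^{2}}{2^{3j}}+\frac{n}{2^{j}}\Bigr)2^{2j+2}\;=\;O\Bigl(\frac{n^{2}}{K}+\frac{n^{2}}{K}\Bigr)
\]
pairs of points of $T$. Choosing $K$ large, this is at most $\tfrac14\binom n2$. So at least $\tfrac34\binom n2$ pairs lie on lines with fewer than $K$ points. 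Each such line carries fewer than $K^{2}$ pairs, hence $|\mathcal L(T)|\ge \binom n2/(2K^{2})\ge c\,n^{2}\ge c\,nk$, using $k\le n$.

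\emph{Case B: some line $\ell$ contains $m>n/K$ points of $T$.} By hypothesis $m\le n-k$, so the set $S$ of points of $T$ off $\ell$ has size $s=n-m\ge k$. Choose $S'\subseteq S$ with $s':=|S'|=\min(s,\lfloor m/2\rfloor)$. Count the pairs $(x,q)$ with $x\in\ell\cap T$ and $q\in S'$; there are $ms'$ of them. Each line $\ell'\ne\ell$ meets $\ell$ in at most one point. If $\ell'$ contains $t$ points of $S'$, it covers at most $t$ of these pairs. The lines with $t\ge2$ therefore cover at most $\sum t(t-1)\le s'(s'-1)\le ms'/2$ pairs. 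Hence at least $ms'/2$ pairs lie on lines containing exactly one point of $S'$, and those lines are pairwise distinct. This gives $|\mathcal L(T)|\ge ms'/2$.

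It remains to compare $ms'/2$ with $nk$ in the two subcases. If $s\le m/2$, then $ms'/2=ms/2\ge nk/(2K)$. Otherwise $s'\ge \lfloor m/2\rfloor$ and $m>n/K$, so $ms'/2=\Omega(n^{2}/K^{2})\ge \Omega(nk/K^{2})$. The small cases with $m\le 1$ are trivial.

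Taking $\beta$ to be the minimum of the constants from both cases finishes the proof. The main obstacle is the Szemerédi--Trotter input in Case A; everything else is elementary double counting. I would cite it rather than reprove it. That is presumably also why the finite-field version needed in this paper has to be handled separately, since Szemerédi--Trotter fails in $\FF_p^2$ for large point sets.
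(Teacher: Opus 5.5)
Your proof is correct and is essentially the paper's argument in Appendix~\ref{sec:appBeck}, which the closing remark there says also gives the real case. The paper makes the same split according to whether the richest line carries more than $n/K$ points. In the spread case it uses the same dyadic Szemer\'edi--Trotter count: rich lines absorb only a small fraction of the $\binom n2$ pairs, so $\Omega(n^2/K^2)$ poor lines are needed.

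The one local difference is the concentrated case. You truncate $S$ to $\min(s,\lfloor m/2\rfloor)$ points and count lines through exactly one point of $S'$, while the paper applies Cauchy--Schwarz to all of $S$ to get $|\mathcal L(T)|\ge sM^2/n$. Both bounds are of order $\min(sM,M^2)$, so neither route gains anything over the other.
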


In this form the result was conjectured by Erdős and proved by Beck \cite{beck1983lattice}, in the same issue of \emph{Combinatorica} in which Szemerédi and Trotter \cite{szemeredi1983extremal} established their incidence bound. The derivation of the former from the latter --- a dyadic decomposition of the point set according to how many of the determined lines are rich --- is by now standard, and it uses nothing about $\RR^2$ beyond the incidence bound itself and the fact that two distinct points lie on a unique line.

We need the statement over $\CC$ rather than over $\RR$, because the Nullstellensatz argument below requires an algebraically closed field of characteristic $0$. The complex Szemerédi--Trotter theorem --- $m$ points and $e$ lines in $\CC^{2}$ determine $O(m^{2/3}e^{2/3}+m+e)$ incidences, with no restriction on $m$ and $e$ --- was proved by Tóth \cite{toth2015complex}, and independently, by rather different methods, by Zahl \cite{zahl2015r4}, for whom it is a special case of an incidence theorem for two-dimensional algebraic surfaces in $\RR^{4}$. Since both ingredients of the standard derivation are therefore available in $\CC^{2}$, the Erdős--Beck theorem holds also over $\CC$ with some universal constant $\beta>0$, and it is the following form that we use below. The exact proof is given in Appendix~\ref{sec:appBeck}.

\begin{theorem}[Erdős--Beck over $\CC$]\label{thm:beckC}
There is an absolute constant $\beta>0$ with the following property. Let $T\subseteq\CC^{2}$ be a set of $n\ge 2$ points and let $k\ge1$ be such that no line contains more than $n-k$ points of $T$. Then $|\mathcal L(T)|\ge \beta\,n\,k$.
\end{theorem}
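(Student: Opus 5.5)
The plan is to run the standard derivation of Beck's theorem from an incidence bound, using the complex Szemerédi--Trotter theorem of Tóth \cite{toth2015complex} (or Zahl \cite{zahl2015r4}) in place of the real one. We use it in its usual consequence form. For $r\ge2$, let $\mathcal L_r$ be the set of lines containing at least $r$ points of $T$. Then
\[
|\mathcal L_r| = O\!\left(\frac{n^2}{r^3}+\frac{n}{r}\right).
\]
This follows by applying the incidence bound to $T$ and $\mathcal L_r$. The incidence count $I$ satisfies $I\ge r|\mathcal L_r|$, and also $I\le C(n^{2/3}|\mathcal L_r|^{2/3}+n+|\mathcal L_r|)$. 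One then solves for $|\mathcal L_r|$, and the term $|\mathcal L_r|$ is absorbed once $r>2C$. For small $r$ one uses the trivial bound $|\mathcal L_r|\le\binom n2$.

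The core step counts collinear pairs by richness. Every pair of points of $T$ lies on exactly one line of $\mathcal L(T)$. Fix a large constant $K$ and split the pairs into three groups.
\begin{itemize}
\item Pairs on lines with fewer than $K$ points. A line with $s$ points carries $\binom s2<K^2$ pairs, so these pairs number at most $K^2|\mathcal L(T)|$.
\item Pairs on lines with $K\le s\le n/K$ points. Summing dyadically over $s\in[2^j,2^{j+1})$ gives at most $\sum_j O(n^2/2^{3j}+n/2^j)\,2^{2j+2}$. This is $O(n^2/K+n\log n)$, which is too weak as written. The $n/r$ term must instead be handled via incidences: each point lies on at most $n-1$ lines, and $\sum_{\ell\in\mathcal L_r}|\ell\cap T|=I=O(n^2/r^2+n)$. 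The cleaner route is the usual one. For lines with $s\in[2^j,2^{j+1})$, the pairs number at most $2^{j+1}\cdot I_j$ with $I_j=O(n^2/2^{2j}+n)$. This gives $O(n^2/2^j+n2^j)$, and summing over $K\le2^j\le n/K$ yields $O(n^2/K)$.
\item Pairs on lines with more than $n/K$ points. By the bound above there are $O(K^3/n\cdot n^0+K)=O(K)$ such lines once $n/K\ge\sqrt n$, and in all cases $O(K)$ lines.
\end{itemize}

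The case split is on the richest line. In Case 1, no line has more than $n/K$ points. The total number of pairs is $\binom n2$, and the middle group takes at most $n^2/4$ once $K$ is chosen large. So $K^2|\mathcal L(T)|\ge n^2/8$, giving $|\mathcal L(T)|\ge n^2/(8K^2)\ge \beta nk$ since $k\le n$. In Case 2, some line $\ell_0$ has $s>n/K$ points, with $s\le n-k$. Each of the $n-s\ge k$ points off $\ell_0$ joins the $s$ points on $\ell_0$ by $s$ distinct lines. Any such line contains only one point of $\ell_0$, so each off-line point determines at least $s$ lines. Any line through two or more off-line points is counted at most $n-s$ times. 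A direct count is simpler: fix any $k'=\min(k,n-s)$ off-line points. They give $s$ distinct lines each, with overlaps only among lines containing two of the chosen points, which number at most $\binom{k'}{2}$. So at least $k's-\binom{k'}2$ lines arise. When $k'\le s$ this is at least $k's/2\ge k n/(2K)$. If $k'>s$, then $n-s>s$, so no line has more than $n/2$ points. Rerunning Case 1 with threshold $n/2$ replaced by $n/K$ then needs a separate medium-richness argument. The simplest fix is to restrict to $k'=\min(k,s)$ off-line points, giving $\ge ks/2 \ge kn/(2K)$ whenever $k\le s$. If $k>s$, then $s<n/2$ and Case 1 is applied with $K$ replaced by $2$ for the rich group, whose pairs are still bounded by $O(n^2/K)$ via the dyadic sum.

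The main obstacle is making the middle dyadic sum rigorous over $\CC$ with honest constants. The complex Szemerédi--Trotter theorem holds with no restriction on $m,e$, so the same calculation as over $\RR$ goes through. I would write it out carefully, choosing $K$ as a sufficiently large absolute constant depending only on the incidence constant $C$.
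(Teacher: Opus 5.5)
Your architecture is the paper's: the rich-lines bound from complex Szemerédi--Trotter, a split at maximal richness $M\approx n/K$, and in the spread case a dyadic count showing that at least $n^2/8$ pairs lie on lines with fewer than $K$ points. This is Proposition~\ref{prop:spread}, and your Case~1 is fine. Incidentally, your first dyadic estimate was already $O(n^2/K)$, since $4n\sum_j 2^j$ over $2^j\le n/K$ is a geometric series.

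The gap is the sub-case $k>s$ of Case~2. There you fall back on Case~1 and claim that the pairs on lines with more than $n/K$ points number $O(n^2/K)$ ``via the dyadic sum''. That claim is false. The dyadic sum over blocks up to the maximal richness $M$ gives $O(n^2/K+nM)$. Once $M$ is a constant fraction of $n$, the second term is of order $n^2$, with a constant coming from the incidence bound that enlarging $K$ cannot reduce, so the estimate says nothing. The bound itself also fails, not just its proof. Take three lines in general position with $n/3$ points each. Then $M=n/3$, the choice $k=2n/3>M$ is allowed, and about a third of all pairs lie on the three rich lines, independently of $K$. This is why the paper puts the threshold at $M\approx n/(32C)$ and handles the whole range $M>n/j$ by the incidence-free Lemma~\ref{lem:concentrated}.

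No medium-richness argument is needed, because your own inclusion--exclusion count already covers $k>s$. Since $n-s\ge k>s$, you can pick $k'=s$ points off $\ell_0$. Each spans $s$ distinct lines to $\ell_0\cap T$, and two chosen points share at most one such line. Hence $|\mathcal L(T)|\ge s^2-\binom s2\ge s^2/2>n^2/(2K^2)\ge nk/(2K^2)$, using $k\le n$. So Case~2 gives $|\mathcal L(T)|\ge \min(k,s)\,s/2\ge nk/(2K^2)$ in every sub-case, and the theorem follows with $\beta=1/(8K^2)$.

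With that repair, your route differs from the paper only in the concentrated case. Lemma~\ref{lem:concentrated} proves $|\mathcal L(T)|\ge (n-M)M^2/n$ by Cauchy--Schwarz over all lines joining the $n-M$ off-line points to $\ell_0$, which treats $k\le M$ and $k>M$ uniformly. Your Bonferroni count is more elementary and gives the better constant $1/(2K)$ when $k\le s$. Finally, your third bullet (``in all cases $O(K)$ lines'') is false for $n<K^2$, but it is never used and can be dropped.
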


In this section we prove the following finite-field analogue of the Erdős--Beck theorem, which is a direct consequence of the theorem over $\CC$ together with Hilbert's Nullstellensatz.

\begin{theorem}\label{thm:finiteBeck}
There is a universal constant $\beta>0$ such that for every $n \in \NN$ there is a number $p_0(n) \in \NN$ with the following property. For every prime $p\geq p_0(n)$, every set $T\subseteq\FF_p^2$ with $|T|=n$, and every $k\in[n]$ such that no line contains more than $n-k$ points of $T$, the set $T$ determines at least $\beta\,n\,k$ distinct lines, i.e.\ $|\mathcal L(T)|\ge \beta n k$.
\end{theorem}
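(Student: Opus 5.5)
The proof transfers Theorem~\ref{thm:beckC} from $\CC^2$ to $\FF_p^2$ by a Lefschetz-type argument: Hilbert's Nullstellensatz over $\QQ$, then reduction modulo $p$. We fix $n$. Then we argue by contradiction, or equivalently we construct a finite list of ``bad'' primes. For a fixed $k\in[n]$, we encode ``a configuration of $n$ points with a prescribed collinearity pattern'' as a system of polynomial equations and inequations with integer coefficients.

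\textbf{Encoding step.} Consider abstract configurations. Such a configuration is a set system $\mathcal S$ on $[n]$, namely the family of subsets that are exactly the traces (maximal collinear subsets of size $\ge2$). It is consistent only if every pair lies in exactly one member. For a set $T=\{Q_1,\dots,Q_n\}$ in $K^2$, its \emph{pattern} is the $\mathcal S$ of its actual traces.

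Now fix a pattern $\mathcal S$ with $|\mathcal S|<\beta nk$ in which every member has size $\le n-k$. We write a system $\Sigma_{\mathcal S}$ in the $2n$ coordinates $x_i,y_i$. It has three parts. First, collinearity equations $\det(Q_b-Q_a,Q_c-Q_a)=0$ for $a,b,c$ in a common member. Second, inequations $\det(Q_b-Q_a,Q_c-Q_a)\ne0$ for triples not in a common member. Third, distinctness inequations $Q_a\ne Q_b$, which we encode by $(x_a-x_b,y_a-y_b)\neq 0$.

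We convert the inequations into equations by the Rabinowitsch trick. For the distinctness condition this needs some care: we introduce, say, a variable $u_{ab}$ and the equation $u_{ab}(x_a-x_b)+v_{ab}(y_a-y_b)=1$. After this, $\Sigma_{\mathcal S}$ has a solution over a field $K$ iff there is $T\subseteq K^2$ of size $n$ with pattern $\mathcal S$.

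By Theorem~\ref{thm:beckC}, $\Sigma_{\mathcal S}$ has no solution over $\CC$. Indeed, a solution would give $n$ distinct points, no line containing more than $n-k$ of them, and fewer than $\beta nk$ determined lines.

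\textbf{Nullstellensatz and reduction.} Since $\Sigma_{\mathcal S}$ has no solution over $\CC$, the Nullstellensatz gives integer-coefficient polynomials $g_j$ and a nonzero integer $N_{\mathcal S}$ with $\sum_j g_j f_j = N_{\mathcal S}$. We obtain these by clearing denominators in a certificate $\sum g_jf_j=1$, which exists over $\QQ$ because the ideal is defined over $\QQ$ and is the unit ideal over $\CC$. For any prime $p\nmid N_{\mathcal S}$, reducing mod $p$ shows that $\Sigma_{\mathcal S}$ has no solution over $\FF_p$: at a solution the left side would vanish while the right side is nonzero in $\FF_p$.

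There are only finitely many patterns $\mathcal S$ for given $n$, and only finitely many $k\in[n]$. We set $p_0(n)$ larger than every prime divisor of every $N_{\mathcal S}$ that occurs.

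\textbf{Conclusion.} Let $p\ge p_0(n)$, and let $T\subseteq\FF_p^2$ with $|T|=n$ have no line containing more than $n-k$ points of $T$. Then the pattern of $T$ has all members of size $\le n-k$. If $|\mathcal L(T)|<\beta nk$, this pattern is one of the excluded $\mathcal S$, and $T$ would give an $\FF_p$-solution of $\Sigma_{\mathcal S}$, which is a contradiction. Here we use \eqref{eq:linesEqualTraces} to identify $|\mathcal L(T)|$ with $|\mathcal S|$.

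\textbf{Main obstacle.} The main obstacle is ensuring the encoding is faithful in both directions and over both fields. The pattern must record exactly which pairs and triples are collinear. The ``no line has more than $n-k$ points'' condition must then be a consequence of the pattern, which it is, since traces are precisely the maximal collinear subsets. The same holds for the number of lines. Distinctness must also be expressed polynomially over $\FF_p$ as well as over $\CC$.

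A subtle point is that the constant $\beta$ must be the same in both settings. This is automatic, because we only transport the non-existence of a $\CC$-configuration with a fixed pattern, and the pattern already fixes $|\mathcal L|$. Everything else is routine finiteness.
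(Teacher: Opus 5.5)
Your proposal is correct and follows essentially the same route as the paper: encode each ``bad'' linear space (your pattern) by collinearity equations, Rabinowitsch equations for non-collinear triples and the $u_{ab},v_{ab}$ distinctness equations; use Theorem~\ref{thm:beckC} to show it is not realizable over $\CC$ and hence obtain a rational Nullstellensatz certificate; clear denominators to get $N_{\mathcal S}$; and exclude the finitely many primes dividing some $N_{\mathcal S}$. One small wrinkle, shared with the paper, is the case $n=1$: the empty pattern vacuously has all members of size $\le n-k=0$ and fewer than $\beta nk$ members, yet it is realizable over $\CC$ (Theorem~\ref{thm:beckC} needs $n\ge2$), so this case must be dismissed separately, which is harmless because the statement is vacuous when $n=1$.
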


\subsection*{Step 1: from points to a linear space}

\begin{lemma}
Let $K$ be a field and $T\subseteq K^2$ finite. Then $(T,\mathcal T(T))$ is a \emph{linear space}: every block has at least $2$ elements, and every pair of distinct points of $T$ lies in exactly one common block.
\end{lemma}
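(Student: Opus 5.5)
The statement needs two things: every block of $\mathcal T(T)$ has at least two points, and every pair of distinct points of $T$ lies in exactly one block. Both follow from the definitions in the preliminaries together with the fact that in $K^2$ two distinct points lie on a unique line. I will check them one at a time.

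For the size condition: a block is by definition a set $\ell\cap T$ with $\ell\in\mathcal L(T)$. Membership in $\mathcal L(T)$ means $|\ell\cap T|\ge 2$, so every block has at least two elements.

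For existence of a common block: take distinct $Q_1,Q_2\in T$ and let $\ell=\{Q_1+i(Q_2-Q_1): i\in K\}$ be the line through them. This is a line of $K^2$ containing both points, so $|\ell\cap T|\ge 2$. Hence $\ell\in\mathcal L(T)$, and $\ell\cap T$ is a block containing $Q_1$ and $Q_2$.

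For uniqueness: suppose $B=\ell\cap T$ and $B'=\ell'\cap T$ are blocks that both contain $Q_1$ and $Q_2$. Then both lines $\ell$ and $\ell'$ pass through $Q_1$ and $Q_2$. By uniqueness of the line through two distinct points, $\ell=\ell'$, and therefore $B=B'$. This is the same injectivity argument as the one behind \eqref{eq:linesEqualTraces}.

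The only fact used beyond the definitions is the uniqueness of the line through two distinct points of $K^2$. That holds over any field by elementary linear algebra, since a line is an affine translate of a one-dimensional subspace, and it is already recorded in the preliminaries. There is no real obstacle; the only care needed is to keep lines and traces apart.
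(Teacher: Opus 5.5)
Your proof is correct and follows essentially the same route as the paper: existence via the line $\ell_{AB}$ through the two points, and uniqueness of the block via uniqueness of that line. You also check explicitly that every block has at least two elements, which the paper leaves implicit since it holds by definition of $\mathcal T(T)$.
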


\begin{proof}
Take distinct $A,B\in T$. They determine a unique line $\ell_{AB}$ in $K^2$, and $\ell_{AB}\cap T$ is a block of $\mathcal T(T)$ containing $A$ and $B$; this gives existence. If some other block also contained $A$ and $B$, it would come from a line $\ell'$ through $A$ and $B$; since $\ell_{AB}$ is the \emph{only} such line, $\ell'=\ell_{AB}$ and the two blocks coincide. This gives uniqueness.
\end{proof}

Note that both quantities in the theorem are properties of the linear space $(T,\mathcal T(T))$ alone: ``no line contains more than $n-k$ points of $T$'' means that every block has size $\le n-k$, and ``$T$ determines at least $\beta nk$ lines'' means $|\mathcal T(T)|\ge \beta nk$, by \eqref{eq:linesEqualTraces}.

\subsection*{Step 2: only finitely many linear spaces on \texorpdfstring{$[n]$}{[n]} exist}

A linear space on $[n]$ corresponds to a partition of the edge set of the complete graph $K_n$ into cliques, and the number of partitions of an $m$-element set is the $m$-th Bell number $B_m$. Hence there are at most $B_{\binom{n}{2}}$ linear spaces on $[n]$. Bounding $B_m\le m^m$ gives the crude bound $n^{n^2}$; we only need finiteness.

Fix the constant $\beta>0$ from the Erdős--Beck theorem over $\CC$. For a linear space $S$ on $[n]$ write $\mathcal T_S$ for its set of blocks. Call $S$ \emph{bad} if there is some $k\in [n]$ such that the maximum block size of $S$ is at most $n-k$ while $|\mathcal T_S| < \beta\,n\,k$; otherwise call $S$ \emph{good}.
Let $\mathfrak{B}_n = \{\, S \text{ a linear space on } [n] : S \text{ is bad} \,\}$; then $|\mathfrak{B}_n| \le n^{n^2}$, and in particular $\mathfrak B_n$ is finite.

\subsection*{Step 3: no bad linear space is realizable over \texorpdfstring{$\CC$}{C}}

Say that a linear space $S$ on $[n]$ is \emph{realizable over a field $K$} if there exist $n$ distinct points of $K^2$ inducing $S$ via the construction of Step~1. The Erdős--Beck theorem over $\CC$ says exactly that no bad linear space is realizable over $\CC$.

\subsection*{Step 4: realizability as a polynomial system over \texorpdfstring{$\ZZ$}{Z}}

Fix a linear space $([n],S)$. We encode ``$S$ is realizable over $K$'' as the solvability over $K$ of a system $F_S$ of polynomials \emph{with integer coefficients} in the variables $x_1,y_1,\dots,x_n,y_n$ together with auxiliary variables:

\begin{itemize}[leftmargin=2em]
\item \textbf{Collinear triples.} For every $\{i,j,k\}$ lying in a common block of $S$:
    \begin{align*}
        \det\begin{pmatrix} x_i & y_i & 1\\ x_j & y_j & 1 \\ x_k & y_k & 1\end{pmatrix} = 0.
    \end{align*}
\item \textbf{Non-collinear triples.} For every $\{i,j,k\}$ \emph{not} contained in a common block of $S$ we need the determinant above to be nonzero. We introduce an auxiliary variable $z_{ijk}$ and impose the Rabinowitsch-trick equation
\begin{align*}
    z_{ijk}\cdot \det\begin{pmatrix} x_i & y_i & 1\\ x_j & y_j & 1 \\ x_k & y_k & 1\end{pmatrix} - 1= 0,
\end{align*}
which is solvable for $z_{ijk}$ over any field if and only if the determinant is nonzero (choose $z_{ijk} = 1/\det$).

\item \textbf{Distinctness.} For every $i\ne j$ we introduce auxiliary variables $u_{ij},v_{ij}$ and impose $u_{ij}(x_i-x_j) + v_{ij}(y_i-y_j) - 1 = 0$. This is solvable for $u_{ij},v_{ij}$ over any field if and only if $(x_i,y_i)\ne (x_j,y_j)$.
\end{itemize}

Let $F_S=\{f_1,\dots,f_{|F_S|}\}$ be the set of left-hand sides of these polynomials and let $V$ be its set of variables. By construction, for every field $K$,
\begin{align*}
    S \text{ is realizable over } K \iff F_S \text{ has a common zero in } K.
\end{align*}
Crucially, $F_S$ does not depend on $K$.

\subsection*{Step 5: a Nullstellensatz certificate with integer coefficients}

Let $([n],S) \in \mathfrak{B}_n$. By Step~3, $F_S$ has no common zero over $\CC$. By the weak Hilbert Nullstellensatz there are $g_i\in\CC[V]$ with
\begin{align}\label{eq:Nullstellensatz}
    \sum_{i=1}^{|F_S|} g_i f_i = 1.
\end{align}
The $g_i$ may in fact be taken in $\QQ[V]$. Indeed, fix one solution $(g_i)\subseteq\CC[V]$ and let $D=\max_i \deg g_i$. The condition ``$1=\sum_i g_i f_i$ with $\deg g_i\le D$'' is equivalent to the solvability of a \emph{finite linear system} $M\mathbf c=\mathbf b$ in the unknown coefficients $\mathbf c=(c_1,\dots,c_m)$ of the $g_i$ (treated as polynomials of degree at most $D$ with undetermined coefficients): expanding $\sum_i g_i f_i$ and comparing coefficients monomial by monomial with $1$ produces one linear equation per monomial. Since each $f_i\in\ZZ[V]$ is fixed, the matrix $M$ and the vector $\mathbf b$ have entries in $\ZZ \subseteq \QQ$.
By construction, this system is consistent over $\CC$. Performing Gaussian elimination on the augmented matrix $(M\mid\mathbf b)$ uses exclusively rational arithmetic ($+,-,\times,\div$), which cannot produce an inconsistent row of the form $(0,\dots,0\mid 1)$ since a complex solution exists. Setting any free variables to $0$ and back-substituting yields a rational solution vector $\mathbf c\in\QQ^m$. Hence, there exist polynomials $g_i\in\QQ[V]$ such that $1=\sum_i g_i f_i$.

Clearing denominators (let $N_S\in\ZZ_{>0}$ be a common denominator of all coefficients occurring in the $g_i$, and put $h_i:=N_S\, g_i\in\ZZ[V]$) produces the integer identity
\begin{align*}
    \sum_{i=1}^{|F_S|} h_i f_i = N_S.
\end{align*}

\subsection*{Step 6: reduction modulo \texorpdfstring{$p$}{p}}

Let $p$ be any prime with $p\nmid N_S$. Then $N_S$ is invertible in $\FF_p$ and, reducing the identity above modulo $p$,
\begin{align*}
    \sum_{i=1}^{|F_S|} N_S^{-1} h_i f_i = 1 \qquad\text{in } \FF_p[V].
\end{align*}
Applying the weak Nullstellensatz over $\FF_p$ (or rather its trivial direction), the polynomials $N_S^{-1} h_i$ certify that $F_S$ has no common zero over $\FF_p$, hence $S$ is not realizable over $\FF_p$.

\subsection*{Step 7: conclusion}

Set $E(n) \;=\; \{\, p \text{ prime} : p \mid N_S \text{ for some } S \in {\mathfrak{B}}_n \,\}$. Since $\mathfrak{B}_n$ is finite (Step~2) and each $N_S$ is a fixed positive integer, $E(n)$ is a \emph{finite} set of primes. Put $p_0(n) := 1+\max\bigl(E(n)\cup\{1\}\bigr)$, so that $p_0(n)=2$ in the degenerate case $\mathfrak B_n=\emptyset$.

Now let $p\geq p_0(n)$ and let $T\subseteq\FF_p^2$ be any set of $n$ distinct points. By Step~6, no linear space in $\mathfrak{B}_n$ is realizable over $\FF_p$; in particular the linear space $([n],\mathcal T(T))$ induced by $T$ (Step~1) is not in $\mathfrak{B}_n$, i.e.\ it is \emph{good}. Being good means precisely: for every $k$ such that no block has size larger than $n-k$ we have $|\mathcal T(T)| \ge \beta\,n\,k$, and $|\mathcal L(T)|=|\mathcal T(T)|$ by \eqref{eq:linesEqualTraces}. This concludes the proof of Theorem~\ref{thm:finiteBeck}.

\begin{remark}
The proof gives no usable bound on $p_0(n)$. An effective $p_0(n)$ is calculated in Section~\ref{sec:HowLargeP}.
\end{remark}

\section{The Main Argument}\label{sec:MainArgument}

We show that every winning plane toggle sequence would have to visit a non-admissible state. Since this is a contradiction, no winning plane toggle sequence exists.

On a high level the argument is as follows. A winning plane toggle sequence toggles one point at a time, so at some moment exactly $n$ points are red. If these points are not almost all on one line, they determine $\Omega(n^2)$ traces by Theorem~\ref{thm:finiteBeck}, most of which are short, as many long traces would require more points. Each short trace has to contain a marked point, so there would have to be a marked point in $\Omega(n^2)$ prescribed places. Since only $O(\sqrt p)$ points are marked on each line, a union bound shows that for a randomly chosen marking such a configuration is very unlikely, and a first-moment argument produces a marking for which no such configuration exists at all.

For the remaining case, in which almost all red points lie on one line, we argue as follows. Suppose $T$ is admissible. Removing a point from $T$ may destroy admissibility, namely if the removed point was the only marked point of a trace of size at least $3$. However, if almost all points of $T$ lie on one line $L$, then most points of $T\cap L$ have the property that all their traces are either of size exactly $2$ or equal to the trace of $L$. Removing such a point yields another admissible set, since removing a point from a trace of size exactly $2$ deletes that trace altogether. Repeating this, we can thin out the red set until it has exactly $n$ points, still enough of them off the line $L$ to fall into the first case.

\subsection{Choice of the constants}

Let $\beta\in(0,1]$ be the constant from Theorem~\ref{thm:finiteBeck}; decreasing $\beta$ only weakens that theorem, so we may indeed assume $\beta\le1$. Fix an integer
\[
  c \;\ge\; \max\Bigl\{\,4,\ \bigl\lceil 8/\beta \bigr\rceil+1 \,\Bigr\}
  \qquad\text{and put}\qquad
  n:=c^{2}, \qquad s:=\Bigl\lceil \sqrt{2c/\beta} \Bigr\rceil+1 .
\]
For a prime $p$ we keep writing $w=\lceil\sqrt{2p}\rceil+1$ for the number of marked points per line; note that $w\le 3\sqrt p$ for every prime $p$.

Call a set $T\subseteq\FF_p^2$ \emph{spread} if $|T\cap \ell|\le n-c$ for every line $\ell$, i.e.\ if no line contains more than $n-n/c$ of its points.

\subsection{Spread sets have many short traces}

\begin{lemma}\label{lem:short-traces}
Let $p\ge p_0(n)$ and let $T\subseteq\FF_p^2$ be spread with $|T|=n$. Then at least $\frac{\beta}{2c}\,n^{2}$ of the traces in $\mathcal T(T)$ have size at most $s$.
\end{lemma}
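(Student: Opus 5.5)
Since $T$ is spread, no line contains more than $n-c$ points of $T$, so we apply Theorem~\ref{thm:finiteBeck} with $k=c$ (note $c\le n$, so $k\in[n]$, and $p\ge p_0(n)$). This gives $|\mathcal T(T)|=|\mathcal L(T)|\ge \beta n c$, using \eqref{eq:linesEqualTraces}. Since $n=c^2$, this reads $|\mathcal T(T)|\ge \frac{\beta}{c}\,n^2$. The remaining task is to show that at most half of these traces, i.e.\ at most $\frac{\beta}{2c}n^2$, have size larger than $s$.

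To bound the number of long traces I would double count pairs. The traces form a linear space on $T$ (Step~1 of Section~\ref{sec:FiniteBeck}), so every pair of distinct points of $T$ lies in exactly one trace. Hence
\[
\sum_{\tau\in\mathcal T(T)}\binom{|\tau|}{2}=\binom n2<\frac{n^2}{2}.
\]
A trace with $|\tau|\ge s+1$ contributes at least $\binom{s+1}{2}\ge s^2/2$. So the number of long traces is less than $\frac{n^2/2}{s^2/2}=n^2/s^2$. By the choice $s\ge\sqrt{2c/\beta}+1$ we get $s^2> 2c/\beta$, and therefore the number of long traces is less than $\frac{\beta}{2c}n^2$.

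Subtracting, the number of traces of size at most $s$ is greater than $\frac{\beta}{c}n^2-\frac{\beta}{2c}n^2=\frac{\beta}{2c}n^2$, which proves the lemma. There is no real obstacle here. The only points to check are that $k=c$ is admissible in Theorem~\ref{thm:finiteBeck} and that the constant bookkeeping works, namely $s^2\ge 2c/\beta$ with the given rounding. The heavy lifting is done by the finite Erdős--Beck theorem.
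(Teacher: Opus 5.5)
Your proof is correct and follows the paper's argument: you apply Theorem~\ref{thm:finiteBeck} with $k=c$ to get at least $\frac{\beta}{c}n^2$ traces, then double count pairs to show that fewer than $\frac{\beta}{2c}n^2$ traces are long, and subtract. The only cosmetic difference is that you directly bound the traces of size at least $s+1$ using $\binom{s+1}{2}\ge s^2/2$, whereas the paper argues by contradiction with traces of size at least $s$ and uses $(s-1)^2\ge 2c/\beta$.
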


\begin{proof}
Applying Theorem~\ref{thm:finiteBeck} with $k=c$ (which is legitimate, since no line contains more than $n-c$ points of $T$) and using $n=c^2$ together with \eqref{eq:linesEqualTraces} gives
\begin{align*}
    |\mathcal{T}(T)| \;=\; |\mathcal{L}(T)| \;\geq\; \beta\, n\, c \;=\; \frac{\beta}{c}\, n^2 .
\end{align*}
Assume for contradiction that at least $\frac{\beta}{2c}n^{2}$ traces had size at least $s$. Distinct traces share at most one point, so they share no pair of points, and a trace of size at least $s$ contains at least $\binom{s}{2}$ pairs. Counting pairs of points of $T$ we would get
\begin{align*}
    \binom{n}{2} \;\ge\; \binom{s}{2}\cdot\frac{\beta}{2c}n^{2} \;=\; \frac{s(s-1)}{2}\cdot\frac{\beta n^{2}}{2c}
    \;>\; \frac{2c/\beta}{2}\cdot\frac{\beta n^{2}}{2c} \;=\; \frac{n^{2}}{2} \;>\; \binom n2 ,
\end{align*}
where we used $s(s-1) > (s-1)^2 \ge 2c/\beta$. This is a contradiction, so fewer than $\frac{\beta}{2c}n^{2}$ traces have size at least $s$. Consequently at least
$\frac{\beta}{c}n^{2}-\frac{\beta}{2c}n^{2}=\frac{\beta}{2c}n^{2}$
traces have size at most $s$.
\end{proof}

\subsection{A marking without admissible spread sets}

\begin{lemma}\label{lem:marking-exists}
For every sufficiently large prime $p$ there is a marking $\mathfrak m$ with $|\mathfrak m(\ell)|=w$ for every line $\ell$ of $\FF_p^2$ such that no spread set $T\subseteq\FF_p^2$ with $|T|=n$ is admissible for $\mathfrak m$.
\end{lemma}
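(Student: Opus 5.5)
We use the first-moment method. Choose the marking $\mathfrak m$ uniformly at random: independently for every line $\ell$, let $\mathfrak m(\ell)$ be a uniformly random $w$-subset of the $p$ points of $\ell$. We will show that the expected number of spread sets $T$ with $|T|=n$ that are admissible for $\mathfrak m$ is less than $1$ once $p$ is large. Then some marking has no such set.

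Fix a spread set $T$ with $|T|=n$ and assume $p\ge p_0(n)$. By Lemma~\ref{lem:short-traces}, $T$ has at least $\frac{\beta}{2c}n^2$ traces of size at most $s$. By \eqref{eq:linesEqualTraces} these come from pairwise distinct lines $\ell$, so the events ``$\ell\cap T\cap\mathfrak m(\ell)\ne\emptyset$'' for these lines are independent. For a single line $\ell$ with $|\ell\cap T|=t\le s$, a union bound over the $t$ points gives
\[
  \Pr\bigl[\ell\cap T\cap \mathfrak m(\ell)\neq\emptyset\bigr]\;\le\; t\cdot\frac{w}{p}\;\le\; \frac{3s}{\sqrt p},
\]
using $w\le 3\sqrt p$. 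Admissibility requires all these events to occur, so
\[
  \Pr[T\text{ admissible}]\;\le\;\Bigl(\frac{3s}{\sqrt p}\Bigr)^{\beta n^2/(2c)} \;=\;\Bigl(\frac{3s}{\sqrt p}\Bigr)^{\beta c^{3}/2}.
\]
Here $n=c^2$ was used, and the bound holds for $p>9s^2$ so that the base is below $1$.

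There are at most $\binom{p^2}{n}\le p^{2n}=p^{2c^2}$ candidate sets $T$. The expected number of admissible spread $n$-sets is therefore at most
\[
  p^{2c^2}\,(3s)^{\beta c^3/2}\,p^{-\beta c^3/4}.
\]
The exponent of $p$ is $2c^2-\beta c^3/4=c^2(2-\beta c/4)$. Since $c\ge\lceil 8/\beta\rceil+1>8/\beta$, we have $\beta c/4>2$, so this exponent is strictly negative. The factor $(3s)^{\beta c^3/2}$ is a constant independent of $p$, because $c$, $s$ and $n$ depend only on $\beta$. Hence the expectation tends to $0$ as $p\to\infty$, and it is below $1$ for every prime $p\ge\max\{p_0(n),p_1\}$ with a suitable $p_1$.

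The main thing to check is the independence and exponent bookkeeping: the random choices must be independent across lines, and distinct short traces must correspond to distinct lines. The latter holds by the bijection $\mathcal L(T)\to\mathcal T(T)$. The exponent must then beat the $p^{2n}$ entropy term, and the constant $c>8/\beta$ was chosen precisely for this. If the sharper hypergeometric probability were needed, we would bound it the same way by $1-\binom{p-t}{w}/\binom{p}{w}\le tw/p$.
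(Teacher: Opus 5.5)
Your proposal is correct and follows the paper's proof essentially step for step. Both use an independent uniformly random $w$-subset per line, independence across the distinct lines carrying the at least $\frac{\beta}{2c}n^2$ short traces from Lemma~\ref{lem:short-traces}, the per-trace union bound $sw/p\le 3s/\sqrt p$, and a first-moment count against $\binom{p^2}{n}\le p^{2n}$ with negative exponent $n(2-\beta c/4)$. Your remark that the base must be below $1$, so that the actual number of short traces can be replaced by its lower bound $\frac{\beta}{2c}n^2$ in the exponent, is a small point the paper leaves implicit.
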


\begin{proof}
Choose $\mathfrak m$ at random: for each line $\ell$ independently, let $\mathfrak m(\ell)$ be a uniformly random $w$-element subset of $\ell$. Then every point of $\ell$ is marked for $\ell$ with probability $w/p$, and the markings of distinct lines are independent.

Fix a spread set $T$ with $|T|=n$ and let $p\ge p_0(n)$. By Lemma~\ref{lem:short-traces} there are at least $\frac{\beta}{2c}n^{2}$ traces of $T$ of size at most $s$. Distinct traces of size at least $2$ come from distinct lines, so the corresponding events are independent, and by the union bound the probability that a fixed trace of size at most $s$ contains a point marked for its line is at most $s\,w/p$. Hence
\[
  \Pr[\,T \text{ is admissible}\,] \;\le\; \Bigl(\frac{s\,w}{p}\Bigr)^{\frac{\beta}{2c}n^{2}} .
\]
There are at most $\binom{p^{2}}{n}\le p^{2n}$ candidates for $T$, so, using $w\le 3\sqrt p$ and $n^2/c=nc$, the expected number of admissible spread sets of size $n$ is at most
\begin{align*}
\binom{p^{2}}{n} \Bigl(\frac{s\,w}{p}\Bigr)^{\frac{\beta n^{2}}{2c}}
&\;\le\; p^{2n} \left(\frac{3s}{\sqrt{p}}\right)^{\frac{\beta n^{2}}{2c}} \\[4pt]
&\;=\; (3s)^{\frac{\beta n^{2}}{2c}} \cdot p^{\,2n-\frac{\beta n^{2}}{4c}}
\;=\; (3s)^{\frac{\beta n^{2}}{2c}} \cdot p^{\,n\left(2-\frac{\beta c}{4}\right)} .
\end{align*}
Since $c>8/\beta$, the exponent $n\bigl(2-\frac{\beta c}{4}\bigr)$ is negative, while the factor $(3s)^{\beta n^{2}/(2c)}$ does not depend on $p$. Hence the whole expression tends to $0$ as $p\to\infty$, so it is smaller than $1$ for every sufficiently large prime $p$.

Since the expected number of admissible spread sets of size $n$ is smaller than $1$, there must be a marking $\mathfrak m$ for which this number is $0$.
\end{proof}

\subsection{No winning plane toggle sequence}

\begin{proposition}\label{prop:ptg-not-winnable}
Let $p$ be a prime that is large enough for Lemma~\ref{lem:marking-exists} and satisfies in addition $p^{2}-p\ge c+1$, and let $\mathfrak m$ be a marking as provided by that lemma. Then $\PTG(p,\mathfrak m)$ cannot be won.
\end{proposition}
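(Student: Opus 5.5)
The plan is to argue by contradiction. Suppose $\mathbf q$ is a winning plane toggle sequence with game path $(T_0,\dots,T_m)$. Since $|T_t|$ changes by exactly one per move, some state has exactly $n$ points; since $T_m=\FF_p^2$ has $p^2>n$ points, I let $i$ be the \emph{last} index with $|T_i|=n$, so that $|T_t|\ge n+1$ for all $t>i$. If $T_i$ is spread, it is an admissible spread set of size $n$, which contradicts Lemma~\ref{lem:marking-exists}. So $T_i$ is not spread, and there is a line $L$ with $|T_i\cap L|\ge n-c+1$, hence $|T_i\setminus L|\le c-1$.

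Next I find a state with exactly $c$ red points off $L$ and many red points on $L$. At the end $|T_m\setminus L|=p^2-p\ge c+1$, and the off-$L$ count changes by at most one per move. So there is a first index $j>i$ with $|T_j\setminus L|=c$. Since $j>i$ we have $|T_j|\ge n+1$, and therefore $|T_j\cap L|\ge n+1-c$. The state $T_j$ is admissible because it lies on the game path.

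The main step is a thinning lemma. If $T$ is admissible, $|T\setminus L|=c$ and $|T\cap L|>n-c$, then one can remove a point of $T\cap L$ and stay admissible. Call $q\in T\cap L$ \emph{bad} if some line $\ell\ne L$ through $q$ contains at least two points of $T\setminus L$. Every such $\ell$ is spanned by a pair of off-$L$ points and meets $L$ in at most one point, so there are at most $\binom c2$ bad points. In addition I reserve one red point of $L$ that is marked for $L$; it exists by admissibility, since $|T\cap L|\ge2$. Because $n-c=c^2-c>\binom c2+1$ for $c\ge4$, some non-reserved good point $q$ can be removed. Removing $q$ changes traces only on lines through $q$. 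The trace on $L$ keeps its reserved marked point. Every other line through $q$ has a trace of size exactly $2$ (namely $q$ and one off-$L$ point), which simply disappears. All other traces are unchanged, so $T\setminus\{q\}$ is admissible. Iterating from $T_j$ until exactly $n-c$ points remain on $L$ yields an admissible set $T'$ with $|T'|=n$.

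Finally I check that $T'$ is spread. The line $L$ contains exactly $n-c$ of its points. Any other line contains at most one point of $L$ plus at most $c$ off-$L$ points, so at most $c+1\le n-c$ points (using $n=c^2$ and $c\ge4$). This again contradicts Lemma~\ref{lem:marking-exists}. The thinning step, and in particular the count of bad points showing that a good removable point exists, is the part I expect to require the most care. The choice of $i$ as the last size-$n$ time is what makes the existence of $T_j$ clean.
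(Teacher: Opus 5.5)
Your proof is correct and follows essentially the same route as the paper. You take the last state of size $n$ and use non-spreadness to get a heavy line. You then move forward to a state with a prescribed number of red points off that line, and thin the on-line points while keeping those on lines spanned by two off-line points plus one marked point of $L$. The differences are only cosmetic: you fix the line $L$ from $T_i$ instead of tracking a main line $L(R)$, which makes the ``changes by at most one per move'' step trivial; you aim for $c$ rather than $c+1$ off-line points; and you remove points one at a time instead of constructing $T$ in one shot.
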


\begin{proof}
Suppose $\mathbf q=(q_1,\dots,q_m)$ were a winning plane toggle sequence, with game path $(R_0,\dots,R_m)$, $R_0=\emptyset$ and $R_m=\FF_p^2$. For a state $R$ let
\[
  L(R) \;:=\; \text{a line with } |R\cap L(R)| = \max_\ell |R\cap \ell| \qquad\text{(the \emph{main line} of $R$)},
\]
and let $f(R) := |R| - |R\cap L(R)|$ be the number of red points off the main line. Ties are broken arbitrarily.

\emph{Step 1: there is an admissible state with exactly $n$ points.} Every state $R_i$ is admissible, $|R_0| = 0$, $|R_m|=p^2>n$ and $|R_i|$ changes by exactly $1$ in every move, so there is an $i$ with $|R_i|=n$. Let $\tau$ be the last index with $|R_\tau|=n$. Again since $|R_i|$ changes by exactly $1$ in every move, every state $R_i$ with $i>\tau$ satisfies $|R_i|>n$.

\emph{Step 2: almost all points in $R_\tau$ are collinear.} By Lemma~\ref{lem:marking-exists} the admissible set $R_\tau$ is not spread, so some line contains more than $n-c$ of its points, i.e.\ $f(R_\tau)<c$.

\emph{Step 3: some later state has exactly $c+1$ red points off its main line.} Note that $f$ changes by at most $1$ per move: adding a point increases $|R|$ by $1$ and cannot decrease $\max_\ell|R\cap\ell|$, and removing a point decreases $|R|$ by $1$ and cannot increase $\max_\ell|R\cap\ell|$. By Step~2, $f(R_\tau)<c$, whereas $f(R_m)=p^{2}-p\ge c+1$. Hence there is an index $i>\tau$ with $f(R_i)=c+1$. Fix such an $i$, write $R:=R_i$ and $L:=L(R)$, so that
\[
  |R|\;\ge\; n , \qquad |R\setminus L| \;=\; c+1 , \qquad |R\cap L| \;=\; |R|-(c+1)\;\ge\; n-c-1 \;\ge\; 2 .
\]

\emph{Step 4: thinning out $R$.} Define $T\subseteq R$ as follows. Put into $T$:
\begin{itemize}
    \item all $c+1$ points of $R \setminus L$;
    \item all points of $R\cap L$ that lie on some line spanned by two points of $R \setminus L$;
    \item one point of $R\cap L$ that is marked for $L$;
    \item arbitrary further points of $R\cap L$ until $|T|=n$.
\end{itemize}
The second item adds at most $\binom{c+1}{2}$ points, since each of the at most $\binom{c+1}{2}$ lines spanned by $R\setminus L$ meets $L$ in at most one point. The third item is possible because $R$ is admissible and $|R\cap L|\ge2$. The construction is consistent, because
\[
  (c+1)+\binom{c+1}{2}+1 \;=\; \frac{c^{2}+3c+4}{2}\;\le\; c^{2} \;=\;n \qquad (c\ge 4),
\]
and because $|R\cap L|\ge n-(c+1)$ points of $L$ are available while only $n-(c+1)$ are needed.

\emph{Step 5: $T$ is spread.} On the main line we have $|T\cap L| = n-(c+1) \le n-c$. Any other line meets $L$ in at most one point, so it contains at most $1+(c+1)=c+2$ points of $T$, and $c+2\le c^{2}-c=n-c$ because $c\ge 4$. Hence $T$ is spread and $|T|=n$.

\emph{Step 6: $T$ is admissible.} Since $\mathbf q$ is a plane toggle sequence, $R$ is admissible. The trace $T\cap L$ has $n-(c+1)\ge2$ elements and contains a point marked for $L$ by construction. Now let $\ell\ne L$ be a line with $|T\cap\ell|\ge2$. All points of $R\setminus L$ belong to $T$, and $\ell$ meets $L$ in at most one point, so $R\cap\ell$ and $T\cap\ell$ can differ at most in that intersection point $q$, and only if $q\in R\setminus T$. Assume therefore that $\ell$ meets $L$ in a point $q\in R\setminus T$. Then $|R\cap \ell|=|T\cap\ell|+1\ge 3$, and since at most one of these points lies on $L$, the line $\ell$ contains at least two points of $R\setminus L$; but then $q$ would have been put into $T$ by the second item of Step~4, a contradiction. Hence $T\cap\ell = R\cap \ell$, and this trace contains a marked point because $R$ is admissible.

So $T$ is an admissible spread set of size $n$, contradicting Lemma~\ref{lem:marking-exists}.
\end{proof}

\begin{corollary}\label{cor:main}
There is a finite lattice on which the toggle game cannot be won. Consequently the NCI conjecture fails if only left-linear trees are allowed.
\end{corollary}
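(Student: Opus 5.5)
The corollary follows by chaining results already established; no new combinatorics is needed. First I fix the parameters. Take $\beta$ and $c$ as in the choice of constants, and put $n=c^2$. Then choose a prime $p$ that satisfies three conditions at once. It must be large enough for Lemma~\ref{lem:marking-exists}, which in particular gives $p\ge p_0(n)$. It must satisfy $p^2-p\ge c+1$. And it must satisfy $p\ge 3$, so that the covering assumption \eqref{eq:allMarkedUsed} can be met. Since all three conditions hold for every sufficiently large prime and there are infinitely many primes, such a $p$ exists.

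Next I build the lattice. Let $\mathfrak m$ be the marking provided by Lemma~\ref{lem:marking-exists}, with $|\mathfrak m(\ell)|=w$ for every line $\ell$. From this I form the lattice $P_{p,\mathfrak m}$ of Section~\ref{sec:LatticeConstruction}. To do so, for each line $L_{i,j}$ I pick $p-1$ distinct pairs of marked points whose union covers $\mathfrak m(L_{i,j})$. This is possible because $\binom w2\ge p-1$ and $\lceil w/2\rceil\le p-1$. The result is a finite lattice, by the appendix. It also has $\mu(b_{i,j})=0$ for every line, by the computation given with the construction.

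Now I rule out a winning toggle sequence. Proposition~\ref{prop:ptg-not-winnable} says that $\PTG(p,\mathfrak m)$ cannot be won. Lemma~\ref{lem:toggle-to-plane} says that a winning toggle sequence on $P_{p,\mathfrak m}$ would yield a winning plane toggle sequence. Taking the contrapositive, the toggle game on $P_{p,\mathfrak m}$ cannot be won, which proves the first sentence.

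For the second sentence I use the correspondence from Section~\ref{sec:ConnectionDot}. Suppose the left-linear strengthening held for the family of sets $S_v$ attached to $P_{p,\mathfrak m}$. Then there would be a left-linear dot-algebra expression for $S_\top$ whose leaves are non-cancelling intersections $S_v$. Reading its leaves from left to right, and recording each operation as toggling on (for $\dotcup$) or toggling off (for $\dotsub$), gives a toggle sequence. The disjointness and subset side conditions of the dot-algebra are exactly the legality condition of the toggle game. Moreover $\mu(v)\ne0$ for every leaf, and the final set equals $S_\top$, which is precisely the goal state.

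The main obstacle is making this converse translation precise. The paper justifies only the direction from toggle sequences to expressions. I need the other direction to be faithful: the set $S_v$ must determine $v$, and intermediate sets must correspond to states. A left-linear tree computes its value by sequential operations on an accumulated set. If that accumulated set is tracked as a set of lattice elements, each operation flips exactly $\downarrow v$. So intermediate values are exactly the states $T_i$, and the dot-algebra conditions translate verbatim. The one thing left to check is that the leftmost leaf corresponds to a first toggle-on from the empty state.
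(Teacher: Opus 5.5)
Your proof is correct and follows the paper's argument: the first sentence is exactly the paper's chain, applying Proposition~\ref{prop:ptg-not-winnable} and the contrapositive of Lemma~\ref{lem:toggle-to-plane} to $P_{p,\mathfrak m}$, and the second sentence rests on the correspondence of Section~\ref{sec:ConnectionDot}. Your explicit left-linear-to-toggle translation just spells out the direction the paper leaves implicit there: the accumulated set changes by $\triangle\,\downarrow\! v$ at each step, $v=\max S_v$, and the leftmost leaf is trivially a legal first toggle-on since $\emptyset\dotcup S_{v_1}=S_{v_1}$.
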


\begin{proof}
Take $p$ and $\mathfrak m$ as in Proposition~\ref{prop:ptg-not-winnable} and consider the lattice $P_{p,\mathfrak m}$ of Section~\ref{sec:LatticeConstruction}. If the toggle game could be won on $P_{p,\mathfrak m}$, then by Lemma~\ref{lem:toggle-to-plane} the plane toggle game $\PTG(p,\mathfrak m)$ could be won, contradicting Proposition~\ref{prop:ptg-not-winnable}.
\end{proof}

\section{How large is $p$?}\label{sec:HowLargeP}

In this section we prove that the size of the resulting counterexample is at most $10^{10^{10^{2215}}}$. The proof of Corollary~\ref{cor:main} is non-constructive, but only in one place. Every step except the passage from $\CC$ to $\FF_p$ in Section~\ref{sec:FiniteBeck} comes with explicit constants; the threshold $p_0(n)$ produced in Step~7 there, on the other hand, was obtained from the mere existence of a Nullstellensatz certificate and therefore carries no bound at all. Effective versions of the Nullstellensatz repair exactly this. We carry the estimate out and obtain a concrete, if astronomical, upper bound on the smallest prime $p$ --- and hence on the size of the smallest lattice --- that our argument produces.

We stress that what is needed here is an \emph{arithmetic} Nullstellensatz. Bounds on the degrees of the $g_i$ alone do not help: the quantity that has to be controlled is the integer $N_S$ obtained by clearing denominators, since $p_0(n)$ is defined through the prime divisors of the $N_S$.

\subsection{The size of the polynomial system}\label{sec:sizeSystem}

Fix $n$ and a linear space $([n],S)\in\mathfrak B_n$, and recall the system $F_S$ of Step~4 in Section~\ref{sec:FiniteBeck}. We record its parameters. Write $\nu:=|V|$ for the number of variables, $|F_S|$ for the number of polynomials, $d$ for the maximal degree and $h$ for the maximal height, i.e.\ the logarithm of the largest absolute value of a coefficient.

\begin{lemma}\label{lem:systemsize}
For $n\ge 3$ the system $F_S$ satisfies
\begin{align*}
  \nu \;\le\; n^{3}, \qquad |F_S| \;\le\; n^{3}, \qquad d \;=\; 3, \qquad h \;=\; 0 .
\end{align*}
\end{lemma}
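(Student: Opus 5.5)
The plan is to count directly from the construction in Step~4 of Section~\ref{sec:FiniteBeck}. The variables are the $2n$ coordinates $x_i,y_i$, one auxiliary variable $z_{ijk}$ for each triple not contained in a block of $S$, and two variables $u_{ij},v_{ij}$ for each unordered pair $\{i,j\}$. (We read ``for every $i\ne j$'' as one equation per unordered pair; this is harmless, and even with ordered pairs the bound below still holds after a slightly cruder count.) Hence
\[
\nu \;\le\; 2n+\binom n3+2\binom n2 \;=\; 2n+\binom n3+n(n-1).
\]
For $n\ge3$ we have $\binom n3\le n^3/6$ and $2n+n^2-n=n^2+n\le \tfrac{4}{9}n^3$. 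The sum is therefore at most $n^3$. If ordered pairs are intended, we get $2n+\binom n3+2n(n-1)\le n^3$ instead, which is checked the same way, with equality-type slack at $n=3$ verified by hand ($6+1+12=19\le27$).

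For $|F_S|$, every triple $\{i,j,k\}$ contributes exactly one polynomial: either a collinearity determinant or a Rabinowitsch equation, according to whether the triple lies in a common block. Every pair contributes one distinctness polynomial. So
\[
|F_S| \;=\; \binom n3+\binom n2 \;\le\; n^3,
\]
or $\binom n3+n(n-1)\le n^3$ with ordered pairs, and both are immediate for $n\ge3$.

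For the degree, expand the $3\times3$ determinant. It equals $x_iy_j-x_jy_i+x_jy_k-x_ky_j+x_ky_i-x_iy_k$, which is homogeneous of degree $2$. Multiplying by $z_{ijk}$ gives degree $3$, and the distinctness polynomials $u_{ij}(x_i-x_j)+v_{ij}(y_i-y_j)-1$ have degree $2$. Since $n\ge3$, at least one triple exists, and it produces a polynomial of degree $2$ or $3$. To get $d=3$ exactly, we need a Rabinowitsch equation to occur. This happens unless all triples are collinear, that is, unless $S$ has a single block $[n]$. That space has maximum block size $n>n-k$ for every $k\ge1$, so it is not bad. Hence for $S\in\mathfrak B_n$ some non-collinear triple exists and $d=3$. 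If the statement is meant as the upper bound $d\le 3$, this remark can be dropped.

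For the height, the expansion above shows every coefficient is $\pm1$. Specifically, the six monomials $x_\ast y_\ast$ are distinct because $i,j,k$ are distinct, so there is no merging of terms. The same holds after multiplying by $z_{ijk}$ and subtracting $1$, and it holds in the distinctness polynomials. So the largest absolute coefficient is $1$ and $h=\log 1=0$. The only mildly delicate points are this no-cancellation check and the interpretation of ``$i\ne j$''; everything else is elementary counting.
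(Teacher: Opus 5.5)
Your proof is correct and follows the paper's argument. It uses the same direct count of the variables and polynomials of Step~4, the same degree reading of the three polynomial types, and the same observation that all coefficients lie in $\{0,\pm1\}$. Your additions are harmless refinements: you count $|F_S|=\binom n3+\binom n2$ exactly, where the paper uses the cruder $2\binom n3+\binom n2$, and you justify that $d=3$ is actually attained (a bad $S$ cannot be the one-block space, so some Rabinowitsch equation occurs), a point the paper passes over.
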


\begin{proof}
The variables are the $2n$ coordinates $x_i,y_i$, one variable $z_{ijk}$ for each non-collinear triple and two variables $u_{ij},v_{ij}$ for each pair $i\neq j$, so
\begin{align*}
  \nu \;\le\; 2n+\binom n3+2\binom n2 \;<\; \frac{n^{3}}{6}+n^{2}+2n\;\le\;n^{3},
\end{align*}
the last inequality because $n^{2}+2n\le\frac56 n^{3}$ for $n\ge3$.
The polynomials are one per collinear triple, one per non-collinear triple and one per pair, so
\begin{align*}
  |F_S| \;\le\; 2\binom n3+\binom n2\;<\;\frac{n^{3}}{3}+\frac{n^{2}}{2}\;\le\;n^{3}.
\end{align*}
The collinearity determinants have degree $2$, the Rabinowitsch equations $z_{ijk}\det(\cdot)-1$ have degree $3$, and the distinctness equations $u_{ij}(x_i-x_j)+v_{ij}(y_i-y_j)-1$ have degree $2$; hence $d=3$. Every coefficient occurring in $F_S$ lies in $\{0,\pm1\}$, so $h=0$.
\end{proof}

\subsection{The arithmetic Nullstellensatz}\label{sec:sizeANSS}

Throughout this section $\ln$ denotes the natural logarithm and $\exp(x)=e^{x}$. We use the following sharp arithmetic Nullstellensatz. For $a\in\ZZ\setminus\{0\}$ its height is $h(a)=\ln|a|$.

\begin{theorem}[Krick, Pardo and Sombra {\cite[Thm.~1]{krick2001sharp}}]\label{thm:arithNSS}
Let $f_1,\dots,f_m\in\ZZ[x_1,\dots,x_\nu]$ have no common zero in $\CC^{\nu}$, and put $d:=\max_i\deg f_i$ and $h:=\max_i h(f_i)$. Then there are $a\in\ZZ\setminus\{0\}$ and $g_1,\dots,g_m\in\ZZ[x_1,\dots,x_\nu]$ with
\begin{align*}
  a\;=\;g_1f_1+\cdots+g_mf_m
\end{align*}
and
\begin{align*}
  \deg g_i\;&\le\;4\nu\, d^{\nu},\\
  h(a),\,h(g_i)\;&\le\;4\nu(\nu+1)\,d^{\nu}\Bigl(h+\ln m+(\nu+7)\ln\bigl((\nu+1)d\bigr)\Bigr).
\end{align*}
\end{theorem}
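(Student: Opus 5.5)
This statement is imported verbatim from \cite[Thm.~1]{krick2001sharp}, so in the paper I would not reprove it. I would only check that our situation meets its hypotheses: the $f_i$ lie in $\ZZ[V]$, and they have no common zero in $\CC^{\nu}$ by Step~3 of Section~\ref{sec:FiniteBeck}. I would also check that the height conventions match, namely that $h(f)$ is the logarithm of the maximal absolute value of a coefficient, so that $h=0$ in Lemma~\ref{lem:systemsize}. Still, here is the route I would follow if I had to establish a bound of this shape myself. It is the strategy of Krick, Pardo and Sombra: an \emph{arithmetic} version of the degree-based effective Nullstellensatz.

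\textbf{Step 1: reduce to a regular sequence.} I would replace $f_1,\dots,f_m$ by $\nu+1$ generic integer linear combinations $q_1,\dots,q_{\nu+1}$. The coefficients are chosen as small integers, which costs only an additive $\ln m$ in the height. The combinations should form a reduced regular sequence outside the unit ideal, meaning that each successive ideal $(q_1,\dots,q_j)$ defines a variety of pure dimension $\nu-j$ that is radical away from a controlled locus. Since any certificate for the $q_j$ is also one for the $f_i$, this reduction is harmless. It explains why the bounds depend on $\nu$ but not on $m$, apart from the $\ln m$ term.

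\textbf{Step 2: induct along the sequence.} At stage $j$ I would take a Noether normalization of the variety $V_j=V(q_1,\dots,q_j)$, using a linear change of variables with small integer coefficients. Over the resulting polynomial ring $A_j$, I would express $1$ modulo $(q_1,\dots,q_{j+1})$ using norms and traces of $q_{j+1}$ in the finite extension $A_j\to\ZZ[x]/(q_1,\dots,q_j)$. The degree bound $\deg g_i\le 4\nu d^{\nu}$ would then follow from the geometric Bézout inequality $\deg V_j\le d^{j}$.

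\textbf{Step 3: control heights, the main obstacle.} The hard part is bounding the heights of the norms and traces, and hence $h(a)$ and $h(g_i)$. Here I would use the \emph{arithmetic} Bézout inequality, which bounds the height of $V_j$ by roughly $j\,d^{j-1}(h+\ln m+c\,\nu\ln((\nu+1)d))$ via Chow forms and mixed Mahler measures. I would then bound the coefficients of the norm polynomials through Mahler-measure estimates of the Chow form, and absorb the integer denominators from the Noether normalization and the Gaussian elimination into the final integer $a$.

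Accumulating these bounds over the $\nu+1$ steps gives the factor $4\nu(\nu+1)d^{\nu}$ in front of $h+\ln m+(\nu+7)\ln((\nu+1)d)$. Keeping track of exactly these constants is the delicate bookkeeping. It is precisely what \cite{krick2001sharp} carries out, and it is the reason I would cite the result rather than redo it.
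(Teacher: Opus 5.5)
Your proposal matches the paper, which likewise gives no proof of this theorem and imports it directly from \cite[Thm.~1]{krick2001sharp}; the paper performs exactly the checks you list, with Lemma~\ref{lem:systemsize} giving $h=0$ and the proof of Corollary~\ref{cor:NSbound} using Step~3 of Section~\ref{sec:FiniteBeck} for the absence of common zeros in $\CC^{\nu}$. Your outline of the Krick--Pardo--Sombra machinery (generic linear combinations, Noether normalizations, traces, arithmetic B\'ezout inequality) goes beyond what the paper does and is not needed for its argument.
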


Applying this to $F_S$ replaces Step~5 of Section~\ref{sec:FiniteBeck}: the theorem produces the integer identity of that step directly, with $N_S:=|a|$, and in addition bounds $N_S$.

\begin{corollary}\label{cor:NSbound}
  For every $n\ge 3$ and every $S\in\mathfrak B_n$,
\begin{align}\label{eq:NSbound}
  N_S\;\le\;\exp\bigl(\exp(3n^{3})\bigr),
  \qquad\text{and consequently}\qquad
  p_0(n)\;\le\;\exp\bigl(\exp(3n^{3})\bigr).
\end{align}
\end{corollary}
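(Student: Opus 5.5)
The plan is to plug the parameters of Lemma~\ref{lem:systemsize} into Theorem~\ref{thm:arithNSS} and then estimate crudely. Fix $n\ge3$ and $S\in\mathfrak B_n$. By Step~3 of Section~\ref{sec:FiniteBeck}, $F_S$ has no common zero in $\CC^\nu$, so Theorem~\ref{thm:arithNSS} applies. It yields $a\in\ZZ\setminus\{0\}$ and $g_i\in\ZZ[V]$ with $a=\sum g_if_i$. We set $N_S:=|a|$; replacing $g_i$ by $-g_i$ if necessary, this is exactly the integer identity used in Step~6. Steps~6 and~7 then go through verbatim with this choice of $N_S$, so the whole task reduces to bounding $h(a)=\ln N_S$.

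For the numerical estimate we use the monotonicity of the bound in Theorem~\ref{thm:arithNSS} in $\nu$, $m$, $d$ and $h$. Substituting $\nu\le n^3$, $m=|F_S|\le n^3$, $d=3$ and $h=0$ gives
\[
  \ln N_S\le 4n^3(n^3+1)\,3^{n^3}\Bigl(\ln n^3+(n^3+7)\ln\bigl(3(n^3+1)\bigr)\Bigr).
\]
Every factor other than $3^{n^3}=e^{n^3\ln 3}$ is polynomial in $n$. We bound that polynomial factor by $e^{\lambda n^3}$ with $\lambda=3-\ln 3\approx 1.90$. Since $n^3\ge27$, this leaves ample room. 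A convenient route is to bound the bracket by $2(n^3+7)\ln(6n^3)$ and the prefactor by $8n^6$, giving an overall polynomial factor of at most $16(n^3+7)n^6\ln(6n^3)$. With $N:=n^3$ this is roughly $16N^3\ln(6N)$, and we then check that $16N^3\ln(6N)\le e^{1.9N}$ for all $N\ge27$. At $N=27$ the left side is about $1.6\cdot10^6$ and the right side is about $e^{51}$, and the ratio of the right side to the left side is increasing in $N$. Hence $\ln N_S\le e^{3n^3}$, which is the first inequality in \eqref{eq:NSbound}.

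For the bound on $p_0(n)$, recall that $p_0(n)=1+\max(E(n)\cup\{1\})$. Every prime in $E(n)$ divides some $N_S$ and is therefore at most $N_S\le\exp(\exp(3n^3))$. This gives $p_0(n)\le 1+\exp(\exp(3n^3))$, which overshoots the claim by one. To remove the $+1$, note that $p_0(n)$ only needs to exceed every prime in $E(n)$. A prime divisor $q$ of $N_S$ with $q=N_S$ forces $N_S$ to be prime, and the slack in the previous paragraph gives $N_S<\exp(\exp(3n^3))$ strictly. Thus $q+1\le \exp(\exp(3n^3))$ once the strict inequality is recorded. If $\mathfrak B_n=\emptyset$, then $p_0(n)=2$ and the bound is trivial.

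The main point of care is the bookkeeping in the elementary inequality, in particular making sure the polynomial factor is absorbed uniformly for all $n\ge3$ and not only asymptotically. Conceptually, the key step is recognizing that Theorem~\ref{thm:arithNSS} directly provides the integer certificate, so the nonconstructive Gaussian-elimination argument of Step~5 is no longer needed.
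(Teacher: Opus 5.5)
Your proposal is correct and follows essentially the paper's route: insert the parameters of Lemma~\ref{lem:systemsize} into Theorem~\ref{thm:arithNSS}, absorb the polynomial factor into the exponential (the paper bounds it by $3^{n^3}$, you by $e^{(3-\ln 3)n^3}$), and then use the remaining slack to absorb the $+1$ in $p_0(n)$. The one wrinkle is that last step: $\exp(\exp(3n^3))$ is not an integer, so the strict inequality $N_S<\exp(\exp(3n^3))$ alone does not give $N_S+1\le\exp(\exp(3n^3))$; record the quantitative slack instead, e.g.\ $\ln N_S\le e^{3n^3}-1$ (which your estimate gives with enormous room), whence $N_S+1\le e\,N_S\le\exp(\exp(3n^3))$.
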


\begin{proof}
By Step~3 of Section~\ref{sec:FiniteBeck} the system $F_S$ has no common zero in $\CC^{\nu}$, so Theorem~\ref{thm:arithNSS} applies. Insert the parameters of Lemma~\ref{lem:systemsize}, namely $h=0$, $d=3$, $m=|F_S|\le n^{3}$ and $\nu\le n^{3}$. Bounding the bracket crudely by
\begin{align*}
  \ln m+(\nu+7)\ln\bigl((\nu+1)d\bigr)\;\le\;3\ln n+6\nu\ln(6\nu)\;\le\;7\,n^{3}\ln\bigl(6n^{3}\bigr)
\end{align*}
and the prefactor by $4\nu(\nu+1)\le 8\nu^{2}\le 8n^{6}$, we obtain
\begin{align*}
  \ln N_S\;=\;h(a)\;\le\;8n^{6}\cdot 3^{\,n^{3}}\cdot 7n^{3}\ln\bigl(6n^{3}\bigr)
  \;=\;56\,n^{9}\ln\bigl(6n^{3}\bigr)\cdot 3^{\,n^{3}} .
\end{align*}
For $n\ge3$ one has $56\,n^{9}\ln(6n^{3})\le 3^{\,n^{3}}$, so
\begin{align*}
  \ln N_S\;\le\;3^{\,2n^{3}}\;=\;\exp\bigl(2n^{3}\ln 3\bigr)\;\le\;\exp\bigl(3n^{3}\bigr),
\end{align*}
which is the first bound in \eqref{eq:NSbound}. For the second, recall from Step~7 of Section~\ref{sec:FiniteBeck} that $p_0(n)=1+\max(E(n)\cup\{1\})$, where $E(n)$ consists of the primes dividing some $N_S$ with $S\in\mathfrak B_n$. Every such prime is at most $N_S$, so $p_0(n)\le 1+\max_{S}N_S$, and the additional $1$ is absorbed by the bound just proved.
\end{proof}

Note that the number $|\mathfrak B_n|\le n^{n^{2}}$ of bad linear spaces plays no role here: we need the \emph{largest} of the $N_S$, not their product.

\subsection{The other constraints on $p$}\label{sec:sizeOther}

Besides $p\ge p_0(n)$, the argument of Section~\ref{sec:MainArgument} imposes two further conditions on $p$, both of which turn out to be negligible. Recall the constants of Section~\ref{sec:MainArgument}: $\beta$ is the constant of Theorem~\ref{thm:finiteBeck}, and $n=c^{2}$, $s=\lceil\sqrt{2c/\beta}\rceil+1$, where $c$ was only required to satisfy $c\ge\max\{4,\lceil8/\beta\rceil+1\}$. From now on we fix the smallest admissible value,
\begin{align*}
  c\;:=\;\max\Bigl\{4,\ \bigl\lceil 8/\beta\bigr\rceil+1\Bigr\}.
\end{align*}

\begin{lemma}\label{lem:otherconstraints}
The conclusion of Lemma~\ref{lem:marking-exists} holds for every prime $p\ge p_0(n)$ with $p>(3c)^{2c}$, and this also implies the condition $p^{2}-p\ge c+1$ of Proposition~\ref{prop:ptg-not-winnable}.
\end{lemma}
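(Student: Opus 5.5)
The plan is to return to the first-moment bound in the proof of Lemma~\ref{lem:marking-exists} and make the phrase ``sufficiently large'' explicit. That proof used largeness of $p$ only in two places. First, $p\ge p_0(n)$ was needed to invoke Lemma~\ref{lem:short-traces}. Second, the expected number of admissible spread sets had to be below $1$. Hence it suffices to show that
\[
  (3s)^{\frac{\beta n^{2}}{2c}}\cdot p^{\,n\left(2-\frac{\beta c}{4}\right)}\;<\;1
  \qquad\text{whenever } p>(3c)^{2c}.
\]
I will also check the auxiliary inequality $w\le 3\sqrt p$, which holds for all primes, so it imposes no extra condition.

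\emph{Step 1: reduce to a condition on $\ln p$.} Since $n=c^2$, we have $\beta n^2/(2c)=\beta c^3/2$ and $n(2-\beta c/4)=-c^2(\beta c/4-2)$. By the choice $c\ge\lceil 8/\beta\rceil+1$ we get $\beta c\ge 8+\beta$, so $\beta c/4-2\ge\beta/4>0$. Taking logarithms, the target inequality becomes
\[
  \tfrac{\beta c^3}{2}\ln(3s)\;<\;c^2\Bigl(\tfrac{\beta c}{4}-2\Bigr)\ln p .
\]

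\emph{Step 2: absorb the constants.} I now need to compare $\beta c/4-2$ with $\beta c/2$ up to a bounded factor. From $\beta c\ge 8+\beta$ alone one only gets $\beta c/4-2\ge(\beta c)\cdot\frac{\beta}{4(8+\beta)}$, which loses a factor depending on $\beta$. So the first thing I will try is to sharpen this lower bound using $c=\lceil 8/\beta\rceil+1$. If that fails, I will instead bound $\ln(3s)$ against $\ln(3c)$. For this, note $s=\lceil\sqrt{2c/\beta}\rceil+1$ and $2/\beta\le c/4$, hence $s\le c$ (using $c\ge4$), so $\ln(3s)\le\ln(3c)$. Then $p>(3c)^{2c}$ gives $\ln p>2c\ln(3c)$. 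The resulting sufficient condition is
\[
  \tfrac{\beta c}{2}\ln(3c)\;<\;\Bigl(\tfrac{\beta c}{4}-2\Bigr)\,2c\ln(3c),
  \qquad\text{i.e.}\qquad \tfrac{\beta}{2}<2\Bigl(\tfrac{\beta c}{4}-2\Bigr).
\]
Using $\beta c/4-2\ge\beta/4$, the right side is at least $\beta/2$. This is exactly the main obstacle: the slack here is at the boundary, so strictness must come from somewhere else. I expect to recover it either from the strict inequality $p>(3c)^{2c}$ (which makes $\ln p>2c\ln(3c)$ strict), or from the integer rounding in $c$, which typically gives $\beta c>8+\beta$. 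Failing both, I would use the slack in $s\le c$, since in fact $s\le\sqrt{c^2/4}+2<c$.

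\emph{Step 3: the condition of Proposition~\ref{prop:ptg-not-winnable}.} Since $c\ge4$, we have $p>(3c)^{2c}\ge 12^8$. Then $p^2-p\ge p>c+1$ holds trivially.
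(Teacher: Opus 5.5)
Your argument is correct and is essentially the paper's: both reduce the first-moment bound to $(\beta c/4-2)\ln p>\tfrac{\beta c}{2}\ln(3s)$ and then use $\beta c/4-2\ge\beta/4$ together with $s\le c$; the paper merely packages this as the intermediate condition $p>(3s)^{2c}$. The boundary issue you flag is already settled by your first option: since $\beta c/4-2>0$ and $\ln p>2c\ln(3c)$ strictly, the non-strict inequality $\beta/2\le 2(\beta c/4-2)$ suffices (this is also where strictness enters the paper's proof), so the other fallbacks are unnecessary — the rounding of $c$ is not guaranteed to give any slack anyway.
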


\begin{proof}
The proof of Lemma~\ref{lem:marking-exists} needs the expectation bound
$(3s)^{\beta n^{2}/(2c)}\,p^{\,n(2-\beta c/4)}<1$, that is,
\begin{align}\label{eq:pcondition}
  p^{\,n\left(\frac{\beta c}{4}-2\right)}\;>\;(3s)^{\frac{\beta n^{2}}{2c}}\;=\;(3s)^{\frac{\beta nc}{2}} ,
\end{align}
where we used $n=c^{2}$. Since $c\ge 8/\beta+1$ we have $\tfrac{\beta c}{4}\ge 2+\tfrac{\beta}{4}$, so the exponent on the left of \eqref{eq:pcondition} is at least $\beta n/4$, and \eqref{eq:pcondition} holds as soon as
\begin{align*}
  p^{\,\beta n/4}\;>\;(3s)^{\beta nc/2},
  \qquad\text{that is,}\qquad
  p\;>\;(3s)^{2c} .
\end{align*}
Finally $c\ge 8/\beta$ gives $2c/\beta\le c^{2}/4$, whence $s\le c/2+2\le c$ for $c\ge4$, so $(3s)^{2c}\le(3c)^{2c}$. The condition $p^{2}-p\ge c+1$ is then immediate.
\end{proof}

Since $(3c)^{2c}$ is only exponential in $c$, whereas $p_0(n)$ is doubly exponential in $n^{3}=c^{6}$, the constraint $p\ge p_0(n)$ dominates completely.

\subsection{The resulting bound}\label{sec:sizeResult}

It remains to express $n$ in terms of the one remaining constant, the constant $A$ of the complex Szemerédi--Trotter theorem (Theorem~\ref{thm:STC}).

\begin{theorem}\label{thm:sizeEstimate}
Corollary~\ref{cor:main} holds with a prime $p$ satisfying
\begin{align*}
  p\;\le\;10^{10^{10^{2215}}}
\end{align*}
and the resulting lattice $P_{p,\mathfrak m}$ has $p^{3}+2p^{2}+2$ elements.
\end{theorem}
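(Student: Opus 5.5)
The count of elements is direct bookkeeping on the construction of Section~\ref{sec:LatticeConstruction}: one top, $p^2$ elements $a_{i,j}$ on level~$1$, $(p^2+p)(p-1)=p^3-p$ elements $c_{i,j,z}$ on level~$2$, $p^2+p$ elements $b_{i,j}$ on level~$3$, and one bottom. These add up to $p^3+2p^2+2$. The rest of the plan concerns the bound on $p$.

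By Corollary~\ref{cor:main}, Proposition~\ref{prop:ptg-not-winnable} and Lemma~\ref{lem:otherconstraints}, any prime
\[
p \;\ge\; X:=\max\bigl\{p_0(n),\,(3c)^{2c}+1\bigr\}
\]
yields an unwinnable lattice. By Bertrand's postulate such a prime exists with $p\le 2X$. Corollary~\ref{cor:NSbound} gives $p_0(n)\le\exp(\exp(3n^3))=\exp(\exp(3c^6))$. Since $(3c)^{2c}$ is negligible next to this, we get
\[
p\le 2\exp(\exp(3c^{6})).
\]
Here $c=\max\{4,\lceil 8/\beta\rceil+1\}$, so everything reduces to an explicit lower bound on the Erdős--Beck constant $\beta$ of Theorem~\ref{thm:beckC}. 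By the reduction in Section~\ref{sec:FiniteBeck}, Theorem~\ref{thm:finiteBeck} inherits exactly this $\beta$.

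The key step, and the main obstacle, is to make $\beta$ explicit in terms of the constant $A$ of the complex Szemerédi--Trotter theorem (Theorem~\ref{thm:STC}). I would retrace the dyadic derivation in Appendix~\ref{sec:appBeck} and track every constant. First, Szemerédi--Trotter bounds the number of $r$-rich lines by $O(A^3 n^2/r^3 + An/r)$. Summing over dyadic ranges $r\in[2^j,2^{j+1})$ shows that pairs of points on lines with between $C_0$ and $n/C_0$ points account for at most half of the $\binom n2$ pairs, provided $C_0$ is a suitable polynomial in $A$. Then one splits into two cases. Either many pairs lie on lines with fewer than $C_0$ points, which gives $\gtrsim n^2/C_0^2\ge n k/C_0^2$ lines. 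Or a line with $\ge n/C_0$ points exists, and then the $k$ points off it each see $\gtrsim n/C_0$ distinct connecting lines. This yields $\beta\ge 1/\mathrm{poly}(A)$, for instance $\beta\ge (cA)^{-6}$ up to explicit numerical factors.

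Second, I need a concrete numerical value of $A$. Tóth's and Zahl's papers give no explicit constant, so I would take a bound of the size implied by the appendix's statement of Theorem~\ref{thm:STC}. If that statement carries no number, I would use the crude explicit estimate it is derived from. Substituting that $A$ should give $c\le 10^{369}$ or so. Then $3c^6\le 10^{2215}\cdot\ln 10$, roughly, which gives
\[
2\exp(\exp(3c^{6}))\;\le\;10^{10^{10^{2215}}}
\]
after converting $\exp$ to powers of $10$ and absorbing the factor~$2$ and the logarithms into the innermost exponent. I expect the bookkeeping of the constants through the dyadic argument to be where the exponent $2215$ is actually decided, so I would do that part carefully and leave the final conversion as routine.
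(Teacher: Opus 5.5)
Your reduction is the same as the paper's: the element count, the threshold $\max\{p_0(n),(3c)^{2c}+1\}$ from Lemma~\ref{lem:otherconstraints}, Bertrand's postulate, $p_0(n)\le\exp(\exp(3n^3))$ from Corollary~\ref{cor:NSbound} with $n=c^2$, then $c$ in terms of $\beta$, and $\beta$ in terms of the Szemerédi--Trotter constant $A$. However, the substance of the statement is the specific exponent $2215$, and the two numerical inputs that produce it are missing, so the proposal does not yet prove the theorem. Your value $c\le 10^{369}$ is back-solved from the target rather than derived: indeed $\bigl(10^{2215}\ln 10/3\bigr)^{1/6}\approx 1.4\cdot 10^{369}$.

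\emph{The value of $A$.} You state that Tóth's and Zahl's papers give no explicit constant. That is not so. The paper takes $A=10^{60}$ directly from Tóth's Theorem~1, which reads $I\le 10^{60}m^{2/3}e^{2/3}+3m+3e$. Without a concrete $A$, no numerical bound follows at all.

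\emph{The exact dependence of $\beta$ on $A$.} You need $\beta$ as an explicit function of $A$, not ``up to explicit numerical factors''. The reason is that $c\approx 8/\beta$ enters the final exponent as $3c^6$, so a factor $10$ in $1/\beta$ already shifts the innermost exponent by about $6$. The appendix gives $\beta=\min\{1/3,\,1/(16384C^2)\}$ with $C=(4A)^3$, that is, $\beta=2^{-26}A^{-6}$. Hence $c=\lceil 2^{29}A^6\rceil+1\le 2^{30}A^6$ and $n\le 2^{60}A^{12}$. Then $3n^3\le 2^{182}A^{36}=2^{182}\cdot 10^{2160}<10^{2215}$, and so $\log_{10}\log_{10}p<10^{2215}\log_{10}e$.

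\emph{Your sketch of the concentrated case.} If you re-derive $\beta$ along your own sketch, the concentrated case as you state it is incomplete. Each of the $s\ge k$ points off the rich line does see $M$ distinct connecting lines, but these lines can be shared among different off-line points. You therefore need an overlap bound; the paper uses the Cauchy--Schwarz count $|\mathcal L(T)|\ge sM^2/n$. Whichever bound you choose fixes the numerical factor in $\beta$, and hence the final exponent.
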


\begin{proof}
By the proof of Theorem~\ref{thm:beckC} in Appendix~\ref{sec:appBeck} we may take
\begin{align*}
  C=(4A)^{3},\qquad
  \beta=\min\Bigl\{\tfrac13,\ \frac{1}{16384\,C^{2}}\Bigr\}=\frac{1}{2^{26}A^{6}},
\end{align*}
the minimum being attained by the second term because $A\ge1$. Hence
\begin{align*}
  c\;=\;\bigl\lceil 8/\beta\bigr\rceil+1\;=\;\bigl\lceil 2^{29}A^{6}\bigr\rceil+1\;\le\;2^{30}A^{6},
  \qquad
  n\;=\;c^{2}\;\le\;2^{60}A^{12},
\end{align*}
and therefore $3n^{3}\le 3\cdot 2^{180}A^{36}\le 2^{182}A^{36}$. Corollary~\ref{cor:NSbound} and Lemma~\ref{lem:otherconstraints} show that any prime $p$ with
\begin{align*}
  \exp\bigl(\exp(3n^{3})\bigr)\;\ge\;p\;\ge\;\max\bigl\{p_{0}(n),\,(3c)^{2c}+1\bigr\}
\end{align*}
does the job, and by Bertrand's postulate such a prime exists below $2\max\{p_{0}(n),(3c)^{2c}+1\}$, which is again at most $\exp(\exp(3n^{3}))\le\exp(\exp(2^{182}A^{36}))$.

By \cite[Thm.~1]{toth2015complex} we may take $A=10^{60}$, so
\begin{align*}
  2^{182}A^{36}\;=\;2^{182}\cdot 10^{2160}\;<\;10^{2215},
  \qquad\text{hence}\qquad
  p\;\le\;\exp\bigl(\exp(10^{2215})\bigr).
\end{align*}
Since $\log_{10}\log_{10}\exp(\exp(Y))=Y\log_{10}e+\log_{10}\log_{10}e$ for $Y>0$, and $\log_{10}e<0.435$, this gives
\begin{align}\label{eq:doublelog}
  \log_{10}\log_{10}p\;<\;0.435\cdot 10^{2215}
  \qquad\text{and in particular}\qquad
  p\;<\;10^{10^{10^{2215}}} .
\end{align}
For the size of $P_{p,\mathfrak m}$, count its levels: one element $\top$, then $p^{2}$ points,
then $(p^{2}+p)(p-1)=p^{3}-p$ elements $c_{i,j,z}$, then $p^{2}+p$ elements $b_{i,j}$, then
$\bot$, giving $p^{3}+2p^{2}+2$ in total. As $p^{3}+2p^{2}+2<p^{4}$ and
$\log_{10}\log_{10}(p^{4})=\log_{10}4+\log_{10}\log_{10}p$, the bound \eqref{eq:doublelog} leaves
enough room to absorb the summand $\log_{10}4<0.61$, so the lattice also has fewer than
$10^{10^{10^{2215}}}$ elements.
\end{proof}

\begin{remark}
Two comments on this estimate.

First, the bound is certainly very far from best possible. Every step of the estimate was made crudely, and, more importantly, the whole route through the Nullstellensatz is wasteful: it replaces a statement about $\FF_p$ by a statement about $\CC$ and pays for the transfer with a doubly exponential factor. A direct incidence-geometric argument over $\FF_p$, if one exists in the required range, would presumably give far more.

Second, the bound is an upper bound on what \emph{this} proof produces, not a lower bound on the size of a counterexample. Determining the size of the smallest lattice on which the toggle game cannot be won remains open.
\end{remark}

\section{Open Problems}

We have shown that the NCI conjecture fails when restricted to left-linear trees. However, our proof only implies the existence of a counterexample of extremely large size. Considering weakened toggle sequences, where vertices with Möbius value $\mu = 0$ can also appear, Corollary~\ref{cor:main} shows in particular that there is a lattice for which every winning weakened toggle sequence must contain at least one vertex with Möbius value $\mu = 0$. This leads to several natural open problems:
\begin{enumerate}
    \item \textbf{Explicit constructions and size bounds:} Constructing an explicit counterexample, or establishing a lower bound on the size of any counterexample.
    \item \textbf{Quantifying and classifying $\mu = 0$ vertices:} Determining the number of $\mu = 0$ vertices required in a weakened toggle sequence (e.g., as a function of the total vertex count) and characterizing which $\mu = 0$ vertices are necessary.
    \item \textbf{Full NCI conjecture:} Determining whether this argument can be extended to refute the NCI conjecture in its full generality. This is work in progress.
\end{enumerate}

\section*{Use of AI}
Claude (Anthropic) was used to write the first four paragraphs of the introduction, parts of the preliminaries, the proof of Lemma~\ref{lem:PpmIsLattice} and Theorem~\ref{thm:beckC} and the argument that the $g_i$ in \eqref{eq:Nullstellensatz} may be taken to lie in $\QQ$. The formulation of Theorem~\ref{thm:finiteBeck} as a condition on realizable linear spaces and the formulation as an equation system is likewise due to the model. Furthermore the calculation of the explicit bound in Section~\ref{sec:HowLargeP} is done mainly with the model. Beyond this, it was used for corrections of formulation and for routine calculations. The author has verified all mathematical content and takes full responsibility for the paper.

\appendix
\section{$P_{p,\mathfrak{m}}$ is a lattice}\label{sec:PpmIsLattice}

\begin{lemma}\label{lem:PpmIsLattice}
$P_{p,\mathfrak{m}}$ is a lattice.
\end{lemma}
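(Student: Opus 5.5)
Since $P_{p,\mathfrak m}$ is finite and has a greatest element $\top$, it suffices to show that any two elements $x,y$ have a meet; joins then exist automatically, because the join of $x$ and $y$ is the meet of the (nonempty, as it contains $\top$) set of common upper bounds. We also first note that the order is really the transitive closure of the described covering relations and that the levels are as claimed. The set of descendants of each vertex is explicit: $\downarrow\! \top$ is everything; for a point $a$, $\downarrow\! a = \{a,\bot\}\cup\{b_{\ell}: a\in\ell\}\cup\{c_{\ell,z}: a\in\Par(c_{\ell,z})\}$; for $c_{\ell,z}$, $\downarrow\! c_{\ell,z}=\{c_{\ell,z},b_\ell,\bot\}$; for $b_\ell$, $\downarrow\! b_\ell=\{b_\ell,\bot\}$. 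So the plan is to check, for each pair $x,y$, that $\downarrow\! x\cap\downarrow\! y$ has a unique maximal element.

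The case analysis runs as follows. If $x$ and $y$ are comparable, or one of them is $\top$ or $\bot$, there is nothing to do. Two distinct $b$'s meet in $\bot$. Two distinct $c$'s have common lower set $\{\bot\}$ if they belong to different lines, and $\{b_\ell,\bot\}$ if they belong to the same line $\ell$; in both cases the maximum is unique. For $c_{\ell,z}$ and $b_{\ell'}$ with $\ell\neq\ell'$ the meet is $\bot$. For a point $a$ and $c_{\ell,z}$ with $a\notin\Par(c_{\ell,z})$, the intersection is $\{b_\ell,\bot\}$ if $a\in\ell$ and $\{\bot\}$ otherwise. Similarly, for $a$ and $b_\ell$ with $a\notin\ell$ the meet is $\bot$.

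The key case is two distinct points $a,a'$. Their common descendants are $\bot$, the $b_\ell$ for lines containing both, and the $c_{\ell,z}$ with $\Par(c_{\ell,z})\supseteq\{a,a'\}$, i.e.\ $\Par(c_{\ell,z})=\{a,a'\}$. Since two distinct points lie on exactly one line $\ell_{aa'}$, there is exactly one such $b$. Moreover, the $c$'s of other lines have $b$'s of other lines as children, and the parents of $c$'s of a fixed line are pairwise distinct, so at most one $c$ exists, namely $c_{\ell_{aa'},z}$ for the unique $z$ with $\Par(c_{\ell_{aa'},z})=\{a,a'\}$, if that pair was used. The meet is therefore that $c$ if it exists, and $b_{\ell_{aa'}}$ otherwise; in both cases everything else in the intersection lies below it.

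The main obstacle, mild as it is, is this point–point case. It relies on exactly the two structural facts that the construction was designed for: the unique line through two points, and the distinctness of the parent pairs of the $c_{\ell,z}$ (together with $|\Par(c_{\ell,z})|=2$ and $\Par(c_{\ell,z})\subseteq\ell$, so that no $c$ of another line can lie below both points). I would write this case out carefully and dispatch the remaining cases in a short table.
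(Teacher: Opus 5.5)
Your proof is correct, and it reaches the result by a somewhat more economical route than the paper.

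\textbf{The paper's route.} The paper verifies joins and meets separately for incomparable pairs.
\begin{itemize}
\item For joins, it notes that no level-$2$ element can be a common upper bound. It then observes that the level-$1$ part of $\uparrow\! r$ is a single point, a parent pair $\Pi_{i,j,z}$, or a line $L_{i,j}$. Hence two incomparable elements share at most one level-$1$ upper bound, and the join is that point or $\top$.
\item For meets, it organises the argument by the kind of common lower bound. If some $c_{i,j,z}$ is a common lower bound, this forces $\{r_1,r_2\}=\Pi_{i,j,z}$. Otherwise at most one $b_{k,l}$ occurs. Otherwise only $\bot$ remains.
\end{itemize}

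\textbf{Your route.} You invoke the standard fact that a finite poset with a greatest element and all binary meets is a lattice. The meet of the nonempty set $U$ of common upper bounds of $x,y$ exists by iterating binary meets. It lies in $U$ because $x$ and $y$ are lower bounds of $U$. You then only compute down-sets, by a case analysis over pairs of element types. This halves the verification and concentrates all the geometry in the point--point case. There you use exactly the facts the paper uses: the unique line through two points, $|\Par(c_{\ell,z})|=2$ with $\Par(c_{\ell,z})\subseteq\ell$, and distinctness of the parent pairs on a line. Your remark that a $c_{\ell',z}$ below both points would put $b_{\ell'}$ below both, forcing $\ell'=\ell_{aa'}$, is a valid way to rule out $c$'s of other lines.

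\textbf{Comparison.} The paper's direct route gives an explicit description of all joins: always a single point or $\top$. Your argument yields this only implicitly. Both approaches rely on the fact that the level-$1$ ancestors of $b_\ell$ are exactly the points of $\ell$. This is built into the construction via \eqref{eq:allMarkedUsed}, and it is what justifies your formula for $\downarrow\! a$, so it is worth citing explicitly when you write that formula down.
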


\begin{proof}
Taking the transitive closure of a DAG defines a partial order, so only the existence of joins and meets has to be checked, and only for incomparable pairs $r_1,r_2$. Since $\top$ and $\bot$ are comparable with everything, $r_1$ and $r_2$ lie on levels $1,2,3$. Write $\Pi_{i,j,z}:=\Par(c_{i,j,z})$ and recall that $\uparrow$ and $\downarrow$ include the element itself, so that
\begin{align*}
  \uparrow\! a_{i,j}&=\{a_{i,j},\top\}, &
  \downarrow\! c_{i,j,z}&=\{c_{i,j,z},b_{i,j},\bot\},\\
  \uparrow\! c_{i,j,z}&=\{c_{i,j,z}\}\cup\Pi_{i,j,z}\cup\{\top\}, &
  \downarrow\! b_{i,j}&=\{b_{i,j},\bot\},\\
  \uparrow\! b_{i,j}&=\{b_{i,j}\}\cup L_{i,j}\cup\{c_{i,j,z}\}_{z}\cup\{\top\}, \span\span
\end{align*}
\[
  \downarrow\! a_{i,j}=\{a_{i,j}\}\cup\{c_{k,l,z}: a_{i,j}\in\Pi_{k,l,z}\}\cup\{b_{k,l}: a_{i,j}\in L_{k,l}\}\cup\{\bot\}.
\]
Here we use that the level-$1$ ancestors of $b_{i,j}$ are exactly the points of $L_{i,j}$, that $c_{i,j,z}$ has two parents and the single child $b_{i,j}$, and that distinct $z$ give distinct parent pairs. We also use that two distinct points of $\FF_p^2$ lie on a unique line and two distinct lines meet in at most one point.

\emph{Joins.} No level-$2$ element is a common upper bound: $\downarrow\! c_{i,j,z}$ is the chain $c_{i,j,z}>b_{i,j}>\bot$, so if $r_1,r_2\le c_{i,j,z}$ then $r_1,r_2\in\{b_{i,j},\bot\}$, which are comparable. Hence every common upper bound other than $\top$ is a level-$1$ point. Now $\uparrow\! r\,\cap\,\text{level }1$ equals $\{a_{i,j}\}$, $\Pi_{i,j,z}$ or $L_{i,j}$ according as $r$ lies on level $1$, $2$ or $3$; in each case it is a single point or a subset of a single line. For two such sets belonging to \emph{incomparable} $r_1,r_2$ the intersection has at most one element: this is clear if the two sets sit on different lines, and if they sit on the same line $L_{i,j}$ the only possibilities are $\Pi_{i,j,z}\cap\Pi_{i,j,z'}$ with $z\ne z'$ (at most one point, as the pairs are distinct), the remaining same-line combinations $\Pi_{i,j,z}\subseteq L_{i,j}$ and $L_{i,j}=L_{i,j}$ occurring only for comparable or equal $r_1,r_2$. So either the intersection is empty and $r_1\vee r_2=\top$, or it is a single point $a$, which is then the least common upper bound because the only other one is $\top>a$.

\emph{Meets.} Dually, no level-$1$ point is a common lower bound, since a level-$1$ point lies below $\top$ only. Suppose some $c_{i,j,z}$ is a common lower bound. Then $r_1,r_2\in\uparrow\! c_{i,j,z}\setminus\{c_{i,j,z},\top\}=\Pi_{i,j,z}$, so $\{r_1,r_2\}=\Pi_{i,j,z}$ is the parent pair; this determines $(i,j)$ (the line through $r_1,r_2$) and $z$ uniquely, and $c_{i,j,z}$ dominates the remaining common lower bounds $b_{i,j}$ and $\bot$, so $r_1\wedge r_2=c_{i,j,z}$. Otherwise all common lower bounds lie in $\{b_{k,l}\}_{k,l}\cup\{\bot\}$, and at most one $b_{k,l}$ occurs: two distinct ones would force $r_1,r_2\in\uparrow\! b_{k,l}\cap\uparrow\! b_{k',l'}$, which by the previous paragraph contains at most one element besides $\top$. Hence the common lower bounds form a chain and $r_1\wedge r_2$ is $b_{k,l}$ if such a $b_{k,l}$ exists and $\bot$ otherwise.
\end{proof}

\section{Proof of Theorem~\ref{thm:beckC}}\label{sec:appBeck}

This appendix proves the Erdős--Beck theorem over $\CC$ from the complex Szemerédi--Trotter theorem. The argument splits into two halves, according to whether the richest line of $T$ carries a constant fraction of all points or not, and it is worth saying at the outset that the two halves are of a completely different nature. If almost all points lie on one line, the bound follows from a Cauchy--Schwarz count and uses no incidence bound at all (Lemma~\ref{lem:concentrated}). If no line is that rich, the incidence bound enters, through a dyadic decomposition of the lines according to how many points of $T$ they carry (Proposition~\ref{prop:spread}). The two halves are then combined in Section~\ref{sec:appAssembly}.

\subsection{Setup}\label{sec:appSetup}

Throughout the appendix, ``line'' means a complex affine line and $T\subseteq\CC^{2}$ is a finite set of $n$ points. For a line $\ell$ we write
\[
  m(\ell)\;:=\;|\ell\cap T| ,
  \qquad
  \mathcal L_{\ge j}\;:=\;\{\,\ell \;:\; m(\ell)\ge j\,\} ,
  \qquad
  M\;:=\;\max_{\ell} m(\ell) ,
\]
and we recall from Section~\ref{sec:preliminaries} that $\mathcal L(T)$ is the set of lines with $m(\ell)\ge2$; the traces themselves play no role in this appendix.

Only two properties of the plane $\CC^{2}$ are used besides the incidence bound: two distinct points lie on a unique line, and two distinct lines meet in at most one point. The first of these already gives an identity that we will use twice, namely that every pair of distinct points of $T$ lies on exactly one line of $\mathcal L(T)$:
\begin{align}\label{eq:pairs}
  \sum_{\ell\in\mathcal L(T)}\binom{m(\ell)}{2}\;=\;\binom{n}{2}.
\end{align}

The incidence bound itself is the following theorem. Over $\RR$ it is due to Szemerédi and Trotter \cite{szemeredi1983extremal}; the complex case, with the same exponents and with no side condition relating the number of points to the number of lines, is due to Tóth and, independently, to Zahl.

\begin{theorem}[Szemerédi--Trotter over $\CC$; Tóth \cite{toth2015complex}, Zahl \cite{zahl2015r4}]\label{thm:STC}
There is an absolute constant $A\ge1$ such that for every finite set $\mathcal P$ of points and every finite set $\mathcal E$ of lines in $\CC^{2}$,
\begin{align*}
  \bigl|\{\,(Q,\ell)\in\mathcal P\times\mathcal E \;:\; Q\in\ell \,\}\bigr|
  \;\le\; A\Bigl(|\mathcal P|^{2/3}\,|\mathcal E|^{2/3}\;+\;|\mathcal P|\;+\;|\mathcal E|\Bigr).
\end{align*}
\end{theorem}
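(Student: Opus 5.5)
We would not reprove this from scratch; the plan is to take the bound as given by \cite[Thm.~1]{toth2015complex} or \cite{zahl2015r4}, as the paper does. We only check that the hypotheses match the form needed in Appendix~\ref{sec:appBeck}: arbitrary finite $\mathcal P$ and $\mathcal E$, no relation between $|\mathcal P|$ and $|\mathcal E|$, and an absolute constant $A$. If one wanted a self-contained argument, I would follow Zahl's route, sketched below.

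First, identify $\CC^{2}$ with $\RR^{4}$. Complex lines become real affine $2$-planes with two combinatorial properties. Two distinct points lie on exactly one of them. Two distinct ones meet in at most one point. Hence the incidence graph between $\mathcal P$ and $\mathcal E$ contains no $K_{2,2}$. The Kővári--Sós--Turán bound then gives the weak estimates $I\le |\mathcal P|\,|\mathcal E|^{1/2}+|\mathcal E|$ and, symmetrically, $I\le |\mathcal E|\,|\mathcal P|^{1/2}+|\mathcal P|$. These already handle the extreme ranges $|\mathcal E|\ge|\mathcal P|^{2}$ and $|\mathcal P|\ge|\mathcal E|^{2}$, so only the middle range needs the stronger bound.

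In the middle range I would apply Guth--Katz polynomial partitioning in $\RR^{4}$. Choose a real polynomial $f$ of degree $D\approx |\mathcal P|^{2/3}/|\mathcal E|^{1/3}$ (up to constants), so that each of the $O(D^{4})$ cells of $\RR^{4}\setminus Z(f)$ contains at most $|\mathcal P|/D^{4}$ points. Next, bound how many cells a $2$-plane not contained in $Z(f)$ can enter. The restriction of $f$ to such a plane is a nonzero bivariate polynomial of degree $D$, whose complement has $O(D^{2})$ connected components by Warren/Milnor--Thom. Summing the Kővári--Sós--Turán bound over cells, using H\"older and the choice of $D$, gives $O(|\mathcal P|^{2/3}|\mathcal E|^{2/3}+|\mathcal P|+|\mathcal E|)$ for the incidences inside cells.

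The main obstacle is the incidences on the zero set $Z(f)$. Unlike the real planar case, a $2$-plane not contained in the $3$-dimensional hypersurface $Z(f)$ can meet it in a whole curve rather than in $O(D)$ points, so points on $Z(f)$ are not cheaply controlled. Planes lying inside $Z(f)$ also need separate treatment. I would first try a second partitioning step restricted to $Z(f)$, partitioning within the variety as in Solymosi--Tao and Zahl. There one exploits the rigidity that a real $2$-plane coming from a complex line is a complex-analytic object: the $2$-planes contained in an irreducible component of $Z(f)$ are very constrained, lying in a bounded number of ``ruled'' families. With this structure one bounds the incidences on $Z(f)$ by induction on dimension, or by a direct count when the component is ruled.

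One subtlety must be kept in view. Beck's dyadic argument in Proposition~\ref{prop:spread} needs the exact exponents $2/3,2/3$; an $|\mathcal P|^{\varepsilon}$ loss such as Solymosi--Tao's would not suffice. The zero-set analysis must therefore be sharp, which is exactly what Tóth and Zahl achieve. For this reason, in the paper I would ultimately rely on their theorem rather than reprove it.
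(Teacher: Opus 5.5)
Your proposal matches the paper, which likewise does not reprove the bound: it quotes Tóth's Theorem~1 in the explicit form $I\le 10^{60}m^{2/3}e^{2/3}+3m+3e$, whence $A=10^{60}$, and, as you do, stresses that no side condition relating $|\mathcal P|$ and $|\mathcal E|$ is needed. One slip in your optional sketch: in $\RR^4$, with $O(D^4)$ cells and each plane meeting $O(D^2)$ cells, the cell incidences are $\lesssim |\mathcal P|\,|\mathcal E|^{1/2}D^{-1}+|\mathcal E|\,D^{2}$, so the degree must be $D\approx|\mathcal P|^{1/3}/|\mathcal E|^{1/6}$ rather than the planar value $|\mathcal P|^{2/3}/|\mathcal E|^{1/3}$, since the planar value makes the term $|\mathcal E|\,D^2$ too large.
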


Tóth \cite[Thm.~1]{toth2015complex} proves the bound in the form
$I\le 10^{60}m^{2/3}e^{2/3}+3m+3e$ and shows that the constant $10^{60}$ suffices;
since $3\le 10^{60}$, this gives Theorem~\ref{thm:STC} with $A=10^{60}$.

We fix this constant $A$ once and for all and put
\begin{align}\label{eq:defC}
  C\;:=\;(4A)^{3}.
\end{align}

\subsection{Rich lines are rare}\label{sec:appRich}

The incidence bound is used only through the following consequence, which bounds the number of lines carrying at least $j$ points of $T$.

\begin{lemma}[rich lines]\label{lem:rich}
For every $j\ge 2$,
\begin{align*}
  \bigl|\mathcal L_{\ge j}\bigr| \;\le\; C\Bigl(\frac{n^{2}}{j^{3}}\;+\;\frac{n}{j}\Bigr).
\end{align*}
\end{lemma}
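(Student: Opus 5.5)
The plan is to apply Theorem~\ref{thm:STC} with $\mathcal P:=T$ and $\mathcal E:=\mathcal L_{\ge j}$, and to split according to which term on the right-hand side dominates. Write $L:=|\mathcal L_{\ge j}|$. Every line of $\mathcal E$ carries at least $j$ points of $T$, so the number of incidences is at least $jL$. Theorem~\ref{thm:STC} then gives
\begin{align*}
  jL\;\le\;A\bigl(n^{2/3}L^{2/3}+n+L\bigr).
\end{align*}
One small point must be checked first: $\mathcal L_{\ge j}$ is finite. This holds because each such line is determined by a pair of points of $T$ (as $j\ge2$), so $L\le\binom n2$ and the theorem applies.

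I would then argue by which of the three terms is at least a third of the right-hand side.
\begin{itemize}
\item If the term $n^{2/3}L^{2/3}$ dominates, then $jL\le 3An^{2/3}L^{2/3}$. This gives $L^{1/3}\le 3An^{2/3}/j$, hence $L\le 27A^3n^2/j^3\le C n^2/j^3$.
\item If the term $n$ dominates, then $jL\le 3An$, i.e.\ $L\le 3An/j\le Cn/j$.
\item If the term $L$ dominates, then $jL\le 3AL$, so $j\le 3A$ (assuming $L>0$; the case $L=0$ is trivial).
\end{itemize}

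The third case is the main obstacle, since the incidence bound gives no information about $L$ there. It is handled without Szemerédi--Trotter, using the pair count \eqref{eq:pairs}. Every line in $\mathcal L_{\ge j}$ contains at least $\binom j2$ pairs, and these pairs are disjoint across lines. Hence $L\binom j2\le\binom n2$, which gives $L\le n^2/(j(j-1))\le 2n^2/j^2$ for $j\ge2$. Since $j\le 3A$, we have $2n^2/j^2=2j\,n^2/j^3\le 6A\,n^2/j^3\le Cn^2/j^3$.

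Finally I check the constants: $C=(4A)^3=64A^3$ dominates $27A^3$, $3A$ and $6A$ because $A\ge1$. So in all three cases $L\le C\bigl(n^2/j^3+n/j\bigr)$.
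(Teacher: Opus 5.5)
Your proof is correct and follows essentially the same route as the paper. You apply Theorem~\ref{thm:STC} to $T$ and $\mathcal L_{\ge j}$, compare the result with the lower bound $I\ge jL$, and handle the regime where the $L$ term cannot be controlled (which forces $j=O(A)$) by elementary pair counting instead of the incidence bound. The differences are cosmetic. The paper first splits at $j\ge 2A$, absorbs $AL$ into the left-hand side, and in the range $j<2A$ uses the cruder bound $L\le\binom{n}{2}$. You instead split three ways on the dominant term and use the sharper count $L\binom{j}{2}\le\binom{n}{2}$.
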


\begin{proof}
Write $L:=|\mathcal L_{\ge j}|$ and let $I$ be the number of incidences between $T$ and $\mathcal L_{\ge j}$. Every line of $\mathcal L_{\ge j}$ carries at least $j$ points of $T$, so
\begin{align}\label{eq:Ilower}
  I\;\ge\;j\,L ,
\end{align}
while Theorem~\ref{thm:STC}, applied with $\mathcal P=T$ and $\mathcal E=\mathcal L_{\ge j}$, gives
\begin{align}\label{eq:Iupper}
  I\;\le\;A\bigl(n^{2/3}L^{2/3}+n+L\bigr).
\end{align}
We distinguish two ranges of $j$.

\emph{The range $j\ge 2A$.} Here $AL\le \tfrac12 jL$, so the term $AL$ in \eqref{eq:Iupper} can be absorbed into the left-hand side of \eqref{eq:Ilower}:
\begin{align*}
  j\,L \;\le\; A\bigl(n^{2/3}L^{2/3}+n\bigr)+\tfrac12 j\,L ,
  \qquad\text{hence}\qquad
  \tfrac12\,j\,L \;\le\; A\,n^{2/3}L^{2/3}\;+\;A\,n .
\end{align*}
At least one of the two summands on the right is at least $\tfrac14 jL$, so at least one of
\begin{align*}
  \tfrac14\,j\,L\;\le\;A\,n^{2/3}L^{2/3}
  \qquad\text{and}\qquad
  \tfrac14\,j\,L\;\le\;A\,n
\end{align*}
holds. In the first case, dividing by $L^{2/3}$ gives $L^{1/3}\le 4An^{2/3}/j$ and therefore $L\le(4A)^{3}n^{2}j^{-3}$; in the second case $L\le 4Anj^{-1}$. Since $4A\le(4A)^{3}=C$, in both cases
\begin{align*}
  L\;\le\;C\,\frac{n^{2}}{j^{3}}\;+\;C\,\frac{n}{j} .
\end{align*}

\emph{The range $2\le j<2A$.} Here we do not need the incidence bound at all. Every line of $\mathcal L(T)$ is determined by a pair of points of $T$, so $L\le\binom n2\le n^{2}/2$; on the other hand $j^{3}<8A^{3}$, whence
\begin{align*}
  L\;\le\;\frac{n^{2}}{2}\;=\;\frac{n^{2}}{j^{3}}\cdot\frac{j^{3}}{2}\;\le\;4A^{3}\,\frac{n^{2}}{j^{3}}\;\le\;C\,\frac{n^{2}}{j^{3}} .
\end{align*}
This proves the lemma in both ranges.
\end{proof}

\subsection{The concentrated case}\label{sec:appConcentrated}

We now treat the case in which one line carries most of the points. Note that the bound below is completely elementary: it uses neither Theorem~\ref{thm:STC} nor Lemma~\ref{lem:rich}, and it is valid over an arbitrary field.

\begin{lemma}\label{lem:concentrated}
Let $\ell_{0}$ be a line with $m(\ell_{0})=M$ and put $S:=T\setminus\ell_{0}$ and $s:=|S|=n-M$. If $s\ge1$, then
\begin{align*}
  |\mathcal L(T)|\;\ge\;\frac{s\,M^{2}}{n} .
\end{align*}
\end{lemma}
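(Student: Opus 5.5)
For each point $Q\in S$, I will count the lines joining $Q$ to the points of $T\cap\ell_0$. Since $Q\notin\ell_0$ and two distinct lines meet in at most one point, the $M$ lines $QA$ with $A\in T\cap\ell_0$ are pairwise distinct. So the set $\mathcal L_Q:=\{QA : A\in T\cap\ell_0\}$ has exactly $M$ elements, and each of them lies in $\mathcal L(T)$ with multiplicity at least $2$. Every line of $\mathcal L_Q$ is different from $\ell_0$ and meets $\ell_0$ in exactly one point of $T$.

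Next I double count the incidences between $S$ and the lines in $\mathcal E:=\bigcup_{Q\in S}\mathcal L_Q$. We have
\[
\sum_{Q\in S}|\mathcal L_Q| \;=\; sM \;=\; \sum_{\ell\in\mathcal E} s(\ell),
\]
where $s(\ell):=|\ell\cap S|\ge1$ for $\ell\in\mathcal E$. Each $\ell\in\mathcal E$ carries at most $M$ points of $T$ in total, one of which lies on $\ell_0$, so $s(\ell)\le M-1$. This gives only $|\mathcal E|\ge sM/(M-1)$, which is too weak. I will therefore use the pair count instead. Pairs of points of $S$ lie on at most one common line, so $\sum_{\ell\in\mathcal E}\binom{s(\ell)}{2}\le\binom s2$, and hence $\sum_{\ell\in\mathcal E} s(\ell)^2\le s^2+sM$.

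Cauchy--Schwarz then gives
\[
(sM)^2 \;=\; \Bigl(\sum_{\ell\in\mathcal E}s(\ell)\Bigr)^2 \;\le\; |\mathcal E|\sum_{\ell\in\mathcal E}s(\ell)^2 \;\le\; |\mathcal E|\,s(s+M),
\]
so
\[
|\mathcal L(T)| \;\ge\; |\mathcal E| \;\ge\; \frac{sM^2}{s+M} \;=\; \frac{sM^2}{n},
\]
using $s+M=n$. This is exactly the claimed bound.

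The main point to get right is the choice of $\sum_\ell s(\ell)^2$ as the quantity to control, rather than bounding the $s(\ell)$ individually. The bound $\sum_\ell s(\ell)^2\le s^2+sM$ is tight enough precisely because $\sum_\ell s(\ell)=sM$ and pairs within $S$ are counted at most once. Everything else is elementary and valid over any field.
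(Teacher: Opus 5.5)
Your proof is correct and is essentially the paper's argument. Your $\mathcal E$ is exactly the paper's set $\mathcal L_\times$ of lines meeting both $S$ and $T\cap\ell_0$, and you use the same first-moment identity $\sum_\ell s(\ell)=sM$, the same pair-count bound $\sum_\ell s(\ell)^2\le s(s+M)=sn$, and the same Cauchy--Schwarz step; counting the $M$ lines through each $Q\in S$ is just the paper's double count of $S\times(T\cap\ell_0)$ organized by the point of $S$.
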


\begin{proof}
Let $\mathcal L_{\times}$ be the set of lines meeting both $S$ and $T\cap\ell_{0}$, and for $\ell\in\mathcal L_{\times}$ put $a_{\ell}:=|S\cap\ell|\ge1$. Every $\ell\in\mathcal L_{\times}$ contains at least one point of $S$ and at least one point of $T$ on $\ell_{0}$, so $\mathcal L_{\times}\subseteq\mathcal L(T)$, and every $\ell\in\mathcal L_{\times}$ is distinct from $\ell_{0}$ and hence meets $\ell_{0}$ in exactly one point.

\emph{First we count the pairs in $S\times(T\cap\ell_{0})$ in two ways.} Each such pair spans a unique line, which lies in $\mathcal L_{\times}$; conversely a line $\ell\in\mathcal L_{\times}$ contains exactly one point of $T\cap\ell_{0}$ and exactly $a_{\ell}$ points of $S$, so it accounts for exactly $a_{\ell}$ of these pairs. Therefore
\begin{align}\label{eq:crosscount}
  \sum_{\ell\in\mathcal L_{\times}} a_{\ell}\;=\;s\,M .
\end{align}

\emph{Next we bound the second moment of the $a_\ell$.} Every pair of distinct points of $S$ lies on exactly one line, so $\sum_{\ell\in\mathcal L_{\times}}\binom{a_{\ell}}{2}\le\binom s2$. Consequently
\begin{align}\label{eq:secondmoment}
  \sum_{\ell\in\mathcal L_{\times}}a_{\ell}^{2}
  \;=\;2\sum_{\ell\in\mathcal L_{\times}}\binom{a_{\ell}}{2}\;+\;\sum_{\ell\in\mathcal L_{\times}}a_{\ell}
  \;\le\;s(s-1)+sM
  \;\le\;s\,(s+M)
  \;=\;s\,n ,
\end{align}
where the last step uses $s+M=n$.

\emph{Finally we apply Cauchy--Schwarz.} Combining \eqref{eq:crosscount} and \eqref{eq:secondmoment},
\begin{align*}
  (s\,M)^{2}
  \;=\;\Bigl(\sum_{\ell\in\mathcal L_{\times}}a_{\ell}\Bigr)^{2}
  \;\le\;|\mathcal L_{\times}|\sum_{\ell\in\mathcal L_{\times}}a_{\ell}^{2}
  \;\le\;|\mathcal L_{\times}|\;s\,n ,
\end{align*}
and dividing by $sn$ gives $|\mathcal L(T)|\ge|\mathcal L_{\times}|\ge s M^{2}/n$, as claimed.
\end{proof}

\subsection{The spread case}\label{sec:appSpread}

It remains to handle the case in which no line is rich. Here the strategy is the one described at the beginning of the appendix: by \eqref{eq:pairs} the $\binom n2$ pairs of points of $T$ are distributed among the lines of $\mathcal L(T)$, and Lemma~\ref{lem:rich} says that the lines carrying many points are too few to absorb more than a fraction of them. The remaining pairs must therefore sit on lines carrying few points, and each such line can absorb only boundedly many pairs, so there have to be many of them.

\begin{proposition}\label{prop:spread}
Let $j$ be a power of two with $j\ge 32C$, and suppose that $n\ge 4$ and $M\le n/j$. Then
\begin{align*}
  |\mathcal L(T)|\;\ge\;\frac{n^{2}}{4j^{2}} .
\end{align*}
\end{proposition}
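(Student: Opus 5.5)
We use the pair count \eqref{eq:pairs} and split the lines of $\mathcal L(T)$ by richness at the threshold $j$. On one side are the \emph{poor} lines with $2\le m(\ell)<j$; on the other are the \emph{rich} lines with $j\le m(\ell)\le M\le n/j$. The plan is to show that rich lines absorb at most half of the $\binom n2$ pairs. Each poor line absorbs fewer than $\binom j2<j^2/2$ pairs, so there must then be at least $\tfrac12\binom n2/(j^2/2)$ poor lines, which is about $n^2/(2j^2)$. That comfortably exceeds $n^2/(4j^2)$ once we absorb the slack from $\binom n2\ge n^2/2\cdot(1-1/n)$, using $n\ge4$.

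To bound the pairs on rich lines, decompose dyadically. For each power of two $2^t$ with $j\le 2^t\le M$, let $\mathcal D_t$ be the set of lines with $2^t\le m(\ell)<2^{t+1}$. By Lemma~\ref{lem:rich} with $j$ replaced by $2^t$, we have $|\mathcal D_t|\le C(n^2/2^{3t}+n/2^t)$. The pairs on lines in $\mathcal D_t$ therefore number at most $|\mathcal D_t|\cdot 2^{2t+1}/2\le C(n^2/2^t+n2^t)$. Summing over $t$ gives two geometric series. The first is $Cn^2\sum_{2^t\ge j}2^{-t}\le 2Cn^2/j$, since $j$ is a power of two. The second is $Cn\sum_{2^t\le M}2^t\le 2CnM\le 2Cn^2/j$, using $M\le n/j$. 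The total is at most $4Cn^2/j\le n^2/8$ because $j\ge32C$. This is at most $\tfrac12\binom n2$ as soon as $n^2/8\le n(n-1)/4$, i.e.\ $n\ge2$.

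Then count. The poor lines carry at least $\binom n2-n^2/8\ge n^2/2-n/2-n^2/8$ pairs, which is at least $n^2/4$ for $n\ge4$. Each poor line carries at most $\binom{j-1}2< j^2/2$ pairs, so $|\mathcal L(T)|\ge (n^2/4)/(j^2/2)=n^2/(2j^2)\ge n^2/(4j^2)$.

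The main obstacle is the bookkeeping in the dyadic sum. The range of $t$ must begin exactly at $j$, which is why $j$ is assumed to be a power of two. The estimate $\binom{m}{2}\le 2^{2t+1}$ for $m<2^{t+1}$ must be applied correctly, and the second series must be controlled by $M$ rather than by $n$, which is where $M\le n/j$ is essential. Everything else is routine.
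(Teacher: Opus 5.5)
Your route is the paper's own: split at the power of two $j$, bound each dyadic block $2^t\le m(\ell)<2^{t+1}$ via Lemma~\ref{lem:rich}, sum the two geometric series using that $j$ is a power of two and $M\le n/j$, and finish by counting pairs on the poor lines. There is, however, a factor-$2$ slip in the block estimate.

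A line of $\mathcal D_t$ can carry $m=2^{t+1}-1$ points, hence $(2^{t+1}-1)(2^t-1)=2^{2t+1}-3\cdot 2^t+1$ pairs. This exceeds $2^{2t}$ for every $t\ge2$, so for every relevant block, since $2^t\ge j\ge 32$. The correct per-line bound is $\binom m2<2^{2t+1}$, as you yourself state in your last paragraph, not $2^{2t+1}/2$. The block bound is therefore $2C(n^2/2^t+n2^t)$. Summing, the rich lines may carry up to $4Cn^2/j+4CnM\le 8Cn^2/j\le n^2/4$ pairs. That is \emph{more} than $\tfrac12\binom n2=n(n-1)/4$. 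So your claim that rich lines absorb at most half of the pairs is not justified by this argument, and neither is your final bound $n^2/(2j^2)$.

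The argument nevertheless closes. The poor lines still carry at least $\binom n2-\tfrac{n^2}{4}=\tfrac{n^2}{4}-\tfrac n2\ge\tfrac{n^2}{8}$ pairs. Here $n\ge4$ is used exactly, not merely to absorb slack. Dividing by $j^2/2$ gives $|\mathcal L(T)|\ge n^2/(4j^2)$, which is precisely the stated bound with no room to spare. With this correction your proof coincides with the paper's.
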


\begin{proof}
\emph{Step 1: the pairs carried by rich lines.} Split the lines with $m(\ell)\ge j$ into dyadic blocks according to $2^{i}\le m(\ell)<2^{i+1}$. Since $j$ is a power of two and $m(\ell)\le M$ always, only the indices $i$ with $\log_{2}j\le i\le\log_{2}M$ occur. A line in the $i$-th block satisfies $\binom{m(\ell)}{2}\le\tfrac12 m(\ell)^{2}<\tfrac12 4^{\,i+1}$ and lies in $\mathcal L_{\ge 2^{i}}$, so by Lemma~\ref{lem:rich}
\begin{align}
  \sum_{m(\ell)\ge j}\binom{m(\ell)}{2}
  &\;\le\;\tfrac12\sum_{i}4^{\,i+1}\,\bigl|\mathcal L_{\ge 2^{i}}\bigr| \nonumber\\
  &\;\le\;\tfrac12\sum_{i}4^{\,i+1}\,C\Bigl(\frac{n^{2}}{2^{3i}}+\frac{n}{2^{i}}\Bigr) \nonumber\\
  &\;=\;2C\sum_{i}\Bigl(\frac{n^{2}}{2^{i}}\;+\;n\,2^{i}\Bigr).\label{eq:twoGeometric}
\end{align}

\emph{Step 2: summing the two geometric series.} The first sum in \eqref{eq:twoGeometric} runs over $2^{i}\ge j$ and the second over $2^{i}\le M$, so
\begin{align*}
  \sum_{i}\frac{n^{2}}{2^{i}}\;\le\;\frac{2n^{2}}{j}
  \qquad\text{and}\qquad
  \sum_{i}n\,2^{i}\;\le\;2nM ,
\end{align*}
and therefore, using first $j\ge 32C$ and then $M\le n/j\le n/(32C)$,
\begin{align}
  \sum_{m(\ell)\ge j}\binom{m(\ell)}{2}
  \;\le\;\frac{4Cn^{2}}{j}\;+\;4CnM
  \;\le\;\frac{n^{2}}{8}\;+\;\frac{n^{2}}{8}
  \;=\;\frac{n^{2}}{4}.\label{eq:richPairs}
\end{align}

\emph{Step 3: many pairs are left for the poor lines.} For $n\ge4$ we have $\binom n2=\tfrac12 n(n-1)\ge\tfrac38 n^{2}$. Subtracting \eqref{eq:richPairs} from the identity \eqref{eq:pairs} therefore leaves
\begin{align*}
  \sum_{m(\ell)<j}\binom{m(\ell)}{2}
  \;=\;\binom n2\;-\;\sum_{m(\ell)\ge j}\binom{m(\ell)}{2}
  \;\ge\;\frac{3n^{2}}{8}-\frac{n^{2}}{4}
  \;=\;\frac{n^{2}}{8}
\end{align*}
pairs on the lines with $m(\ell)<j$.

\emph{Step 4: each poor line absorbs few pairs.} A line with $m(\ell)<j$ satisfies $\binom{m(\ell)}{2}\le\binom{j}{2}\le j^{2}/2$. Dividing the number of pairs on poor lines by the maximal number of pairs per poor line gives
\begin{align*}
  |\mathcal L(T)|\;\ge\;\bigl|\{\ell : m(\ell)<j\}\cap\mathcal L(T)\bigr|
  \;\ge\;\frac{n^{2}/8}{j^{2}/2}\;=\;\frac{n^{2}}{4j^{2}} ,
\end{align*}
which is the assertion.
\end{proof}

\subsection{Putting the two cases together}\label{sec:appAssembly}

\begin{proof}[Proof of Theorem~\ref{thm:beckC}]
Let $T\subseteq\CC^{2}$ with $|T|=n\ge2$ and let $k\ge1$ be such that no line contains more than $n-k$ points of $T$; that is, $M\le n-k$. Let $\ell_{0}$, $S$ and $s=n-M$ be as in Lemma~\ref{lem:concentrated}. Then
\begin{align}\label{eq:sgek}
  s\;=\;n-M\;\ge\;k\;\ge\;1 ,
\end{align}
so it suffices to bound $|\mathcal L(T)|$ from below by a constant multiple of $ns$. Observe also that $M\ge2$, since any two points of $T$ lie on a common line; together with $M\le n-k\le n-1$ this forces $n\ge3$.

Suppose first that $n=3$. Then $M=2$ and $k=1$, the three points are not collinear, and $|\mathcal L(T)|=3\ge \tfrac13\,nk$.

Suppose from now on that $n\ge4$, and fix a power of two $j$ with
\begin{align*}
  32C\;\le\; j\;\le\; 64C .
\end{align*}
We distinguish the two cases announced above.

\emph{Case 1: $M>n/j$.} By Lemma~\ref{lem:concentrated} and this lower bound on $M$,
\begin{align*}
  |\mathcal L(T)|\;\ge\;\frac{sM^{2}}{n}\;>\;\frac{s}{n}\cdot\frac{n^{2}}{j^{2}}\;=\;\frac{ns}{j^{2}}\;\ge\;\frac{ns}{4j^{2}} .
\end{align*}

\emph{Case 2: $M\le n/j$.} By Proposition~\ref{prop:spread} and $s\le n$,
\begin{align*}
  |\mathcal L(T)|\;\ge\;\frac{n^{2}}{4j^{2}}\;\ge\;\frac{ns}{4j^{2}} .
\end{align*}

So in both cases $|\mathcal L(T)|\ge ns/(4j^{2})$. Using \eqref{eq:sgek} and $j\le 64C$ this becomes
\begin{align*}
  |\mathcal L(T)|\;\ge\;\frac{n\,k}{4j^{2}}\;\ge\;\frac{n\,k}{4\,(64C)^{2}}\;=\;\frac{n\,k}{16384\,C^{2}} .
\end{align*}
Together with the case $n=3$ treated above, the theorem holds with
\begin{align*}
  \beta\;:=\;\min\Bigl\{\,\tfrac13,\;\frac{1}{16384\,C^{2}}\,\Bigr\},
\end{align*}
which by \eqref{eq:defC} is an absolute constant.
\end{proof}

\begin{remark}
Two comments on the proof. First, the same argument over $\RR$, with the real Szemerédi--Trotter theorem \cite{szemeredi1983extremal} in place of Theorem~\ref{thm:STC}, gives Theorem~\ref{thm:beckR}; the field is used nowhere else.

Second, the threshold separating the two cases has to be taken at $M\approx n/(32C)$ and not, as one might expect, at $M\approx n/2$. Indeed, if $M$ is a constant fraction of $n$, then the rich lines can carry a constant fraction of all pairs: take $n/j$ parallel lines with $j$ points each, which carry $\tfrac{n}{j}\binom j2\approx\tfrac12 nj$ pairs. The counting in Proposition~\ref{prop:spread} therefore genuinely requires $M$ to be small compared with $n$, and it is Lemma~\ref{lem:concentrated} --- which is insensitive to how large $M$ is --- that covers the remaining range.
\end{remark}

\bibliographystyle{amsplain}
\bibliography{references}

\end{document}